\DeclareRobustCommand{\lyxsout}[1]{\ifx\\#1\else\sout{#1}\fi}
\numberwithin{equation}{section}
\numberwithin{figure}{section}
\theoremstyle{plain}
\newtheorem{thm}{\protect\theoremname}
\theoremstyle{definition}
\newtheorem{defn}[thm]{\protect\definitionname}
\theoremstyle{definition}
\newtheorem{example}[thm]{\protect\examplename}
\theoremstyle{remark}
\newtheorem{rem}[thm]{\protect\remarkname}
\theoremstyle{plain}
\newtheorem{cor}[thm]{\protect\corollaryname}
\theoremstyle{plain}
\newtheorem{lem}[thm]{\protect\lemmaname}
\setlist[enumerate]{leftmargin=*,label=(\roman*),align=left}
\newcommand{\xyR}[1]{ \makeatletter
\xydef@\xymatrixrowsep@{#1} \makeatother} % end of \xyR
\newcommand{\xyC}[1]{ \makeatletter
\xydef@\xymatrixcolsep@{#1} \makeatother} % end of \xyC
\newcommand{\ra}{\longrightarrow}
\newcommand{\xra}[1]{\xrightarrow{\ \ #1\ \ }} % extendible arrow
\newcommand{\field}[1]{\mathbb{#1}}
\newcommand{\R}{\field{R}} % reals
\newcommand{\N}{\field{N}} % naturals
\newcommand{\eps}{\varepsilon} % for the sake of brevity only
\renewcommand{\phi}{\varphi}
\newcommand{\diff}[1]{\ifmmode\mathchoice{\hbox{\rm d}#1}  % displaystyle
 {\hbox{\rm d}#1}  % normal 
 {\scalebox{0.75}{$\hbox{\rm d}#1$}}  % scriptstyle 
 {\scalebox{0.35}{$\hbox{\rm d}#1$}}  % scriptscriptstyle
 \fi} % dt,dx,... for integrals
\newcommand{\abs}[2][\empty]{\ifx#1\empty\left|#2\right|%
\else#1\vert #2 #1\vert\fi}% optional arg=size
\newcommand{\Coo}{\mbox{\ensuremath{\mathcal{C}}}^{\infty}} % C infinity
\DeclareMathOperator{\Set}{{\bf Set}} % category of Sets
\newcommand{\OR}{{\mathcal{O}}\mbox{$\R^\infty$}} % category of open sets in numerical spaces
\newcommand{\Rtil}{\widetilde \R} % real Colombeau generalized number
\newcommand{\frontRise}[2]{\ifmmode\mathchoice{{\vphantom{#1}}^{\scalebox{0.6}{$#2$}}}  % displaystyle
 {{\vphantom{#1}}^{\scalebox{0.56}{$#2$}}}  % normal 
 {{\vphantom{#1}}^{\scalebox{0.47}{$#2$}}}  % scriptstyle 
 {{\vphantom{#1}}^{\scalebox{0.35}{$#2$}}}\fi} % scriptscriptstyle 
\newcommand{\RC}[1]{\frontRise{\R}{#1}\Rtil}
\newcommand{\frontRiseDown}[3]{\ifmmode\mathchoice{{\vphantom{#1}}^{\scalebox{0.6}{$#2$}}_{\scalebox{0.6}{$#3$}}}  % displaystyle
 {{\vphantom{#1}}^{\scalebox{0.56}{$#2$}}_{\scalebox{0.56}{$#3$}}}  % normal 
 {{\vphantom{#1}}^{\scalebox{0.47}{$#2$}}_{\scalebox{0.47}{$#3$}}}  % scriptstyle 
 {{\vphantom{#1}}^{\scalebox{0.35}{$#2$}}_{\scalebox{0.35}{$#3$}}}\fi} % scriptscriptstyle 
\newcommand{\rcrho}{\RC{\rho}}
\newcommand{\rti}{\RC{\rho}}
\newcommand{\gsf}{\frontRise{\mathcal{G}}{\rho}\mathcal{GC}^{\infty}}
\newcommand{\rhoGs}{\frontRise{\mathcal{G}}{\rho}\mathcal{G}^{\text{s}}}
\newcommand{\rhoMod}{\frontRise{\mathcal{E}}{\rho}\mathcal{E}_{\text{M}}}
\newcommand{\rhoNeg}{\frontRise{\mathcal{N}}{\rho}\mathcal{N}}
\newcommand{\ptind}{\displaystyle \mathop {\ldots\ldots\,}} % marks with smth over. Usage: \ptind^{...}
\newcommand{\DIff}{ \quad\;\; :\!\iff \quad } % :iff by definition
\providecommand{\corollaryname}{Corollary}
\providecommand{\definitionname}{Definition}
\providecommand{\examplename}{Example}
\providecommand{\lemmaname}{Lemma}
\providecommand{\remarkname}{Remark}
\providecommand{\theoremname}{Theorem}
\begin{document}
\title{Universal properties of spaces of generalized functions}
\author{Djamel eddine Kebiche \and Paolo Giordano}
\thanks{P.~Giordano has been supported by grants P33538, P34113, P33945,
DOI: 10.0.217.224/PAT1996423, of the Austrian Science Fund FWF}
\address{\textsc{Faculty of Mathematics, University of Vienna, Austria, Oskar-Morgenstern-Platz
1, 1090 Wien, Austria}}
\thanks{D.~Kebiche has been supported by grant P33538 of the Austrian Science
Fund FWF}
\email{\texttt{paolo.giordano@univie.ac.at, djameleddine.kebiche@univie.ac.at}}
\subjclass[2020]{46Fxx, 46F30, 18-XX}
\keywords{Distributions, Generalized functions for nonlinear analysis, Universal
properties}
\begin{abstract}
By means of several examples, we motivate that universal properties
are the simplest way to solve a given mathematical problem, explaining
in this way why they appear everywhere in mathematics. In particular,
we present the co-universal property of Schwartz distributions, as
the simplest way to have derivatives of continuous functions, Colombeau
algebra as the simplest quotient algebra where representatives of
zero are infinitesimal, and generalized smooth functions as the universal
way to associate set-theoretical maps of non-Archimedean numbers defined
by nets of smooth functions (e.g.~regularizations of distributions)
and having arbitrary derivatives. Each one of these properties yields
a characterization up to isomorphisms of the corresponding space.
The paper requires only the notions of category, functor, natural
transformation and Schwartz's distributions, and introduces the notion
of universal solution using a simple and non-abstract language.
\end{abstract}

\maketitle

\section{Introduction}

Mathematicians eventually try to solve a problem in the best possible
way. For example, we can consider a geometrically intrinsic solution,
or the best computational algorithm, or the most general answer. Frequently
motivated by the searching of beauty, \cite{Had49}, we can also require
that the solution is the ``simplest'' one, i.e.~it has to use the
minimal amount of conventional constructions and data other than the
given ones from which the problem must depend on. At a first sight,
a non-trivial possible mathematical formalization of the idea of \emph{simplest
solution} should involve information theory (see e.g.~\cite{Sch07}
and references therein) or mathematical logic. In this paper, using
only a minimal amount of category theory, we see a common informal
interpretation of \emph{universal solution} as the simplest way to
solve a given problem. It is well-known that universal constructions
appear everywhere in mathematics, \cite{Lei14}, and hence this interpretation
justifies why this happens. We list several examples justifying this
interpretation, in particular for spaces of generalized functions
(GF) both in linear and nonlinear frameworks.

We will see that a universal solution not only candidates itself as
the simplest way to solve a given problem, but its universal property
is able to highlight what are the data of the problem and the conventional
choices in any other possible construction. Frequently, this paves
the way for generalizations, and it always directly yields an axiomatic
characterization of these universal solutions. In the point of view
of several mathematicians, universal properties are so important that
they take them as a starting point: ``it is not important how you
solve this problem, because the key point is that you have a universal
solution, which is unique up to isomorphisms''.

Concerning spaces of GF, in this paper we clearly consider Schwartz's
distributions as ``the simplest way to have derivatives of continuous
functions'' (see \cite{Sch50}), and hence we show the corresponding
(not-well known) universal property in Sec.~\ref{sec:distributions}
(see also \cite{MeMu00}, but where the universal property mistakenly
lacks condition Thm.~\ref{thm:UP of D'}.\ref{enu:distPoly}). Following
the algebraic construction of Sebastiao e Silva (\cite{SeS61}) of
distributions, we see how to obtain a similar universal construction
for distributions on Hilbert spaces. Any other solution of the same
problem will reasonably satisfy the same minimal and meaningful universal
property, and hence it will be isomorphic to our solution, see Sec.~\ref{subsec:Application}.

Among nonlinear settings for GF extending Schwartz's distributions,
Co\-lom\-beau's special algebra (see e.g.~\cite{Col84,Col85,Col92,GKOS})
is frequently perceived as the simplest one. In Sec.~\ref{sec:Colombeau},
we prove that it is actually the most simple quotient algebra. We
will also consider the recent \emph{generalized smooth functions}
(see \cite{GKV15,MTG,GKV24} and references therein) because it is
even more general than Colombeau's algebras, but with several improved
properties such as more general domains, the closure with respect
to composition, a better integration theory and Hadamard's well-posedness
for every PDE (in infinitesimal neighborhoods), see Sec.~\ref{sec:GSF}.
As a secondary result, we hence have an axiomatic description up to
isomorphisms of Colombeau's special algebra and of generalized smooth
functions. In particular, the ring of Colombeau's generalized numbers
reveals to be the simplest quotient ring $\mathcal{M}/\sim$ containing
the infinite numbers $[\eps^{-q}]_{\sim}$, $q\in\N,$ and where every
zero-net $[x_{\eps}]_{\sim}=0$ is generated by an infinitesimal function:
$\lim_{\eps\to0^{+}}x_{\eps}=0$ (see Sec.~\ref{subsec:A-particular-case};
see also \cite{Tod09} for another axiomatic description in the framework
of nonstandard analysis and where the latter property does not hold).

In the following, we will use the conventions:
\begin{itemize}
\item universal = terminal = limit = projective: the unique arrow arrives
to the universal object.
\item co-universal = initial = co-limit = injective: the unique arrow starts
from the universal object.
\end{itemize}
\noindent The paper is self-contained, in the sense that it requires
only the notions of category, functor, natural transformation and
Schwartz's distributions.

\noindent We start by introducing the notion of universal solution
using a simple and non-abstract language.

\section{General definition of (co-) universal property}

We start by defining in general terms what a universal property is.
We will use only basic notions of category theory, and will give a
definition near to the common use of universal properties.
\begin{defn}
\textcolor{black}{\label{def:Universal}Let $\mathbf{C}$ be a category.
Let $\mathcal{P}(C)$ and $\mathcal{{Q}}(f,A,B)$ be two properties
of $A$, $B$, $C$ and $f$, where $A$, $B$, $C$ are objects of
$\mathbf{C}$ and $f$ is an arrow of $\mathbf{C}$. Assume that:
\begin{equation}
\mathcal{{Q}}(f,A,B),\:\mathcal{{Q}}(g,B,C)\ \Rightarrow\ \mathcal{{Q}}(g\circ f,A,C),\label{eq:comp}
\end{equation}
\begin{equation}
\mathcal{{Q}}(1_{A},A,A).\label{eq:identity}
\end{equation}
Then we say that $C$ }\textcolor{black}{\emph{is a universal solution
of}}\textcolor{black}{{} $\mathcal{{P}}$ }\textcolor{black}{\emph{with
respect to}}\textcolor{black}{{} $\mathcal{Q}$ if}
\begin{enumerate}
\item \textcolor{black}{\label{enu:PcUniversal}$\mathcal{P}(C)$, i.e.~the
object $C$ solves the problem $\mathcal{P}(-)$.}
\item \textcolor{black}{\label{enu:uniqueArrUniversal}$\forall D\in\mathbf{C}:\ \mathcal{P}(D)\ \Rightarrow\ \exists!\varphi:D\longrightarrow C:\ \mathcal{Q}(\varphi,D,C)$,
i.e.~for any other solution $D$ of the same problem $\mathcal{P}(-)$,
we can find one and only one morphism $\phi:D\ra C$ that satisfies
the property $\mathcal{Q}$.}
\end{enumerate}
\textcolor{black}{Similarly, we say that $C$ }\textcolor{black}{\emph{is
a co-universal solution of}}\textcolor{black}{{} $\mathcal{{P}}$ }\textcolor{black}{\emph{with
respect to}}\textcolor{black}{{} $\mathcal{Q}$ if}
\begin{enumerate}[resume]
\item \textcolor{black}{$\mathcal{P}(C)$,}
\item \textcolor{black}{$\forall D\in\mathbf{C}:\ \mathcal{P}(D)\ \Rightarrow\ \exists!\varphi:C\longrightarrow D:\ \mathcal{Q}(\varphi,C,D)$.}
\end{enumerate}
\end{defn}

The proof of the following theorem trivially generalizes the classical
proofs concerning the uniqueness of universal objects up to isomorphisms:
\begin{thm}
\label{thm:UPuniqueUpToIso}\textcolor{black}{Suppose that $C_{1}$
and $C_{2}$ are two (co-)universal solutions of $\mathcal{P}$ with
respect to $\mathcal{Q}$. Then $C_{1}$ is isomorphic to $C_{2}$
in $\mathbf{C}$.}
\end{thm}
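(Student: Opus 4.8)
The plan is to mimic the classical ``uniqueness of universal objects'' argument, now phrased entirely in terms of the abstract properties $\mathcal{P}$ and $\mathcal{Q}$. I treat the universal case; the co-universal case is dual and obtained by reversing all arrows. Since $C_1$ and $C_2$ both satisfy $\mathcal{P}$, I first invoke Definition~\ref{def:Universal}.\ref{enu:uniqueArrUniversal} twice: applying the universal property of $C_1$ to the solution $D=C_2$ yields a unique $\varphi:C_2\to C_1$ with $\mathcal{Q}(\varphi,C_2,C_1)$, and applying the universal property of $C_2$ to the solution $D=C_1$ yields a unique $\psi:C_1\to C_2$ with $\mathcal{Q}(\psi,C_1,C_2)$.

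Next I form the composites $\varphi\circ\psi:C_1\to C_1$ and $\psi\circ\varphi:C_2\to C_2$. Using the closure hypothesis \eqref{eq:comp}, from $\mathcal{Q}(\psi,C_1,C_2)$ and $\mathcal{Q}(\varphi,C_2,C_1)$ I get $\mathcal{Q}(\varphi\circ\psi,C_1,C_1)$; similarly $\mathcal{Q}(\psi\circ\varphi,C_2,C_2)$. On the other hand, by the identity hypothesis \eqref{eq:identity} we have $\mathcal{Q}(1_{C_1},C_1,C_1)$ and $\mathcal{Q}(1_{C_2},C_2,C_2)$. Now comes the only genuinely delicate point: I want to conclude $\varphi\circ\psi=1_{C_1}$, but Definition~\ref{def:Universal}.\ref{enu:uniqueArrUniversal} only guarantees uniqueness of an arrow $C_1\to C_1$ satisfying $\mathcal{Q}$ when $C_1$ itself plays the role of the \emph{target} universal object $C$ and $D=C_1$ plays the role of the source. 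That is exactly the situation here: $C_1$ satisfies $\mathcal{P}$, so applying its universal property to $D=C_1$ there is a \emph{unique} $\chi:C_1\to C_1$ with $\mathcal{Q}(\chi,C_1,C_1)$. Both $\varphi\circ\psi$ and $1_{C_1}$ have this property, hence $\varphi\circ\psi=1_{C_1}$. Symmetrically $\psi\circ\varphi=1_{C_2}$.

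Therefore $\varphi$ and $\psi$ are mutually inverse morphisms in $\mathbf{C}$, so $\varphi:C_2\to C_1$ is an isomorphism and $C_1\cong C_2$. For the co-universal case, the hypotheses \eqref{eq:comp} and \eqref{eq:identity} are symmetric in the arrow direction, so the same argument applies verbatim after swapping domains and codomains in every application of $\mathcal{Q}$, giving mutually inverse arrows $\varphi:C_1\to C_2$ and $\psi:C_2\to C_1$.

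The main obstacle, as noted, is purely bookkeeping: one must be careful that the uniqueness clause is applied with the correct object in the role of the distinguished universal solution $C$, since $\mathcal{Q}$ is not assumed symmetric and the two composites live over different objects. Once one observes that each $C_i$ is entitled to apply its \emph{own} uniqueness clause to the identity situation $D=C_i$, the argument closes with no further work; the closure condition \eqref{eq:comp} is precisely what makes the composite admissible as a competitor, and \eqref{eq:identity} is what makes the identity a competitor.
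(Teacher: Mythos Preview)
Your proof is correct and follows essentially the same approach as the paper's: both obtain the two comparison arrows from the universal properties of $C_1$ and $C_2$, use \eqref{eq:comp} to show the composites satisfy $\mathcal{Q}$, and then invoke uniqueness together with \eqref{eq:identity} to identify each composite with the identity. The only differences are cosmetic (your $\varphi,\psi$ versus the paper's $\varphi_1,\varphi_2$, and your added commentary on the bookkeeping and the dual co-universal case).
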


\begin{proof}
\textcolor{black}{Since $C_{1}$ is a universal solution of $\mathcal{P}$
with respect to $\mathcal{Q}$, using \ref{enu:uniqueArrUniversal}
of Def.~\ref{def:Universal} for $D=C_{2}$, there exists a unique
$\varphi_{1}:C_{2}\longrightarrow C_{1}$ such that the property $\mathcal{Q}(\varphi_{1},C_{2},C_{1})$
holds. In a similar way, there exists a unique $\varphi_{2}$ such
that $\varphi_{2}:C_{1}\longrightarrow C_{2}$ so we have $\mathcal{Q}(\varphi_{2},C_{1},C_{2})$.
By assumption \eqref{eq:comp} on $\mathcal{{Q}}$, the property $\mathcal{Q}(\varphi_{2}\circ\varphi_{1},C_{2},C_{2})$
holds. Using again Def.~\ref{def:Universal}\ref{enu:uniqueArrUniversal}
with $D=C_{2}$, we get that only one arrow $\varphi$ satisfies $\mathcal{Q}(\varphi,C_{2},C_{2})$.
Since $\mathcal{Q}(1_{C_{2}},C_{2},C_{2})$ also holds by \eqref{eq:identity},
then $\varphi_{2}\circ\varphi_{1}=1_{C_{2}}$. In a similar way, we
have $\varphi_{1}\circ\varphi_{2}=1_{C_{1}}$, which proves the theorem.}
\end{proof}
\textcolor{black}{Starting from the properties $\mathcal{P}$ and
$\mathcal{Q}$, we can define a new category $\mathbf{C}(\mathcal{P},\mathcal{Q})$.
Its objects are the objects of the category $\mathbf{C}$ that satisfy
the property $\mathcal{{P}}$ (i.e.~all the solutions of our problem
$\mathcal{P}(-)$), and its arrows are the arrows $\phi$ of the category
$\mathbf{C}$ such that $\mathcal{{Q}}(f,C,D)$ holds (so that the
property $\mathcal{Q}$ links all these solutions), i.e.:}
\begin{itemize}
\item \textcolor{black}{$C\in\mathbf{C}(\mathcal{P},\mathcal{Q}):\iff\mathcal{{P}}(C)$,}
\item \textcolor{black}{$D\stackrel{\varphi}{\longrightarrow}C$ in $\mathbf{C}(\mathcal{P},\mathcal{Q})$$:\iff\mathcal{{Q}}(\varphi,D,C)$,
$D\stackrel{\varphi}{\longrightarrow}C$ in $\mathcal{\mathbf{C}}$,}
\item \textcolor{black}{$\theta=\psi\circ\varphi$ in $\mathbf{C}(\mathcal{{P}},\mathcal{{Q}}):\iff$$\theta=\psi\circ\varphi$
in $\mathbf{C}.$}
\end{itemize}
\textcolor{black}{Then, we have that $C$ is a universal solution
of $\mathcal{{P}}$ with respect to $\mathcal{{Q}}$ if and only if
$C$ is terminal in $\mathbf{C}(\mathcal{{P}},\mathcal{{Q}})$ (i.e.~for
all $D\in\mathbf{C}(\mathcal{P},\mathcal{Q})$ there exists one and
only one $\varphi:D\ra C$ in $\mathbf{C}(\mathcal{P},\mathcal{Q})$),
and $C$ is a co-universal solution of $\mathcal{{P}}$ with respect
to $\mathcal{{Q}}$ if and only if $C$ is initial in $\mathbf{C}(\mathcal{{P}},\mathcal{{Q}})$
(i.e.~for all $D\in\mathbf{C}(\mathcal{P},\mathcal{Q})$ there exists
one and only one $\varphi:C\ra D$ in $\mathbf{C}(\mathcal{P},\mathcal{Q})$).}

As we mentioned above, a (co-)universal solution of $\mathcal{P}$
is considered as the (co-)simplest or (co-)most natural solution of
that problem. Even considering only the following elementary examples,
we can start to justify this interpretation:
\begin{example}
\ 

\begin{enumerate}
\item Let's consider the problem to specify a topology on a set $X\in\mathbf{Set}$.
The category $\mathcal{\mathbf{C}}$ in this example is the category
of all the topologies on $X$ viewed as a poset, i.e.~``$\subseteq$''
is the unique arrow of $\mathcal{\mathbf{C}}$, and we write $\tau\subseteq\sigma$
if the topology $\tau$ is coarser than the topology $\sigma$. The
properties $\mathcal{{P}}$ and $\mathcal{{Q}}$ are defined as follow.
\begin{align*}
\mathcal{P}(\tau)\  & :\iff\ \tau\;\text{is a topology on }X,\\
\mathcal{Q}(i,\tau,\sigma)\  & :\iff\ i=\subseteq,\ \tau\subseteq\sigma.
\end{align*}
The trivial topology $(\left\{ \emptyset\right\} ,X)$ is the co-universal
solution of the property $\mathcal{P}$ with respect to the property
$\mathcal{Q}$ and the discrete topology is the universal solution.
Clearly, these also appear as trivial solutions; on the other hand,
note that they are also the simplest/non-conventional solutions starting
from the unique data $X\in\mathbf{Set}$ and with respect to the problem
``set a topology on $X$'': any other solution would necessarily
introduce (in the case of the trivial topology) or delete (in the
case of the discrete topology) something which is not related to the
problem or the data itself. This example also shows that the notion
of \emph{simplest solution} can be implemented in two ways: from ``below''
(co-universal) or from ``above'' (universal).
\item \textcolor{black}{Let $R$ be a ring and let $x\not\in R$. What would
be the smallest/simplest ring containing both $x$ and $R$? Any ring
that contains $x$ and $R$ must contain also sums of terms of the
form $r\cdot x^{n}$ for any integer $n$ and any element $r\in R$.
Intuitively, the simplest solution is therefore the ring of polynomials
$R[x].$ The co-universal property can be highlighted as follow: Let
$S\in\mathbf{Ring}$ be a ring, then we can consider the property
$\mathcal{P}(S,s)$ whenever $x\in S$ and $s:R\longrightarrow S$
is a ring homomorphism, and the property $\mathcal{Q}(f,\left(S,s\right),\left(L,l\right))$
if $S\stackrel{f}{\longrightarrow}L$ in $\mathbf{Ring}$ (i.e.~it
is a morphism of rings) and $f\circ s=l$. The ring of polynomials
$R[x]$ is the co-universal solution of $\mathcal{P}$ with respect
to $\mathcal{Q}$, i.e.~the simplest way to extend the ring $R$
by adding a new element $x\notin R$. Clearly, we have $\mathcal{P}(R[x],i)$,
where $i:R\ra R[x]$ is the inclusion. Let $S\in\mathbf{Ring}$, and
let $s:R\ra S$ be a ring homomorphism, i.e.~$\mathcal{P}(S,s)$
holds, then the unique $\phi:R[x]\longrightarrow S$ of Def.~\ref{def:Universal}.\ref{enu:uniqueArrUniversal}
is given by $\phi\left(\sum_{i}r_{i}x^{i}\right)=\sum_{i}s(r_{i})x^{i}$.}
\item Let $(X,d)$ be a metric space and $(X^{*},d^{*})$ be its completion
as the usual quotient of Cauchy sequences: $(x_{n})_{n}\sim(y_{n})_{n}$
if and only if $\lim_{n\rightarrow\infty}d(x_{n},y_{n})=0$. Define
an isometry $\phi:X\longrightarrow X^{*}$ by setting $\phi(x):=[x]_{\sim}$,
where $[x]_{\sim}$ is the equivalent class generated by the constant
sequence $x_{n}=x\in X$; we have that $\phi(X)$ is dense in $X^{*}$
(see e.g.~\cite{Simm63}). The triple $(X^{*},d^{*},\phi)$ is co-universal
among all the triples $(Y,\delta,\psi)$, where $(Y,\delta)$ is a
complete metric space and $\psi:X\longrightarrow Y$ is an isometry
such that $\psi(X)$ is dense in $Y$. There is therefore a unique
map $\iota:X^{*}\ra Y$ such that $\iota\circ\phi=\psi$, which is
defined as follows: Let $x^{*}\in X^{*}$. Since $\phi(X)$ is dense
in $X^{*}$, there exists a sequence $(x_{n})_{n}$ of $X$ such that
$(\phi(x_{n}))_{n}$ converges to $x^{*}$. The sequence $(\phi(x_{n}))_{n}$
is a Cauchy sequence and since $\phi$ and $\psi$ are isometries,
the sequence $(\psi(x_{n}))_{n}$ is also a Cauchy sequence in $Y$
which converges because $Y$ is Cauchy complete. We can thus set $\text{\ensuremath{\iota}}(x^{*}):=\lim_{n\rightarrow\infty}(\psi(x_{n}))$
which is well defined because $\phi$ and $\psi$ are isometries.
\item Let $U$, $V\in\textbf{Vect}$ be vector spaces. The simplest way
to obtain a bilinear map $U\times V\xra{b}T$ into another vector
space $T$ is the tensor product $T=U\otimes V$, with $b(u,v)=u\otimes v$.
This construction is indeed co-universal with respect to the properties:
\begin{align*}
\mathcal{P}(b,T)\DIff & T\in\textbf{Vect},\ U\times V\xra{b}T\text{ is bilinear}\\
\mathcal{Q}(\phi,(b,T),(w,W))\DIff & \phi:T\ra W\text{ in }\textbf{Vect}.
\end{align*}
It is well-known that there are several ways to define the tensor
product $U\otimes V$, even if they all satisfy this co-universal
property (and hence, by Thm.~\ref{thm:UPuniqueUpToIso}, they are
all isomorphic as vector spaces). Note that the category $\mathbf{C}$
of Def.~\ref{def:Universal} is the category of all the pairs $(b,T)$
satisfying $\mathcal{P}(b,T)$.
\end{enumerate}
\end{example}

We underscore that in all these universal solutions (as well as in
products, sums, quotients, etc.~of spaces) there are no conventional
choices and they are the most natural solutions: any other (non-isomorphic)
solution would appear as less natural, e.g.~by adding (to the co-universal
solution) or subtracting (to the universal solution) anything that
does not strictly depend on the data of the problem.

\subsection{Preliminary notions: presheaf and sheaf}

For the sake of completeness, and also to specify all our notations,
in this section we briefly recall the notions of presheaf and sheaf,
because they are used in our universal characterization of spaces
of GF.

In the following, we denote by $\mathbf{Set}$ the category of sets
and functions, by $\textbf{Mod}_{R}$ the category of modules over
the ring $R$, so that $\mathbf{Vect}_{K}:=\textbf{Mod}_{K}$ is the
category of vector spaces over a given field $K$, $\OR$ is the category
having as objects open sets $U\subseteq\R^{u}$ of any dimension $u\in\N=\left\{ 0,1,2,\ldots\right\} $,
and smooth functions as arrows, and finally $\mathbf{Ring}$ is the
category of rings and \textcolor{black}{ring-homomorphisms. If $\mathbb{T}=\left(|\mathbb{T}|,\tau\right)$
is a topological space, we use the same symbol to also denote the
category induced by its open sets as a preorder, i.e.~}the category
of open sets $A\in\tau$ of the given topology and only one arrow
``$\subseteq$'', i.e.~we write $A\overset{}{\xra{\subseteq}}B$
in $\mathbb{T}$ if $A\subseteq B$. We finally denote by $\mathbf{C}^{\text{op}}$
the opposite of any category $\mathbf{C}$; for example, we write
$f\in(\OR)^{\text{op}}(A,B)$ if $f\in\mathcal{C}^{\infty}(B,A)$
is a smooth function from $B\subseteq\R^{b}$ into $A\subseteq\R^{a}$,
and $\mathbb{T}^{\text{op}}(A,B)$ is non empty if and only if $B\subseteq A$.
\begin{defn}
\label{def:sheaf}~
\begin{enumerate}
\item Let $R$ be a ring. A presheaf $P$ of $\textbf{Mod}_{R}$ is a functor
$P:\mathbb{T}^{\text{op}}\longrightarrow$$\textbf{Mod}_{R}$. We
denote by $P(U)\in\textbf{Mod}_{R}$ its evaluation at $U\in\mathbb{T}^{\text{op}}$
and by $P_{U,V}:=P(U\leq V):P(U)\longrightarrow P(V)$ its evaluation
on the arrow $U\supseteq V$. The map $P_{U,V}$ is called \emph{restriction}
from $U$ to $V$.
\item If $(U_{j})_{j\in J}$ is a covering in $\mathbb{T}$ of $U\in\mathbb{\mathbb{T}^{\text{op}}},$
then we say that $(f_{j})_{j\in J}$ is a \emph{$P$-compatible family}
if and only if
\begin{enumerate}
\item $\forall j\in J:\ f_{j}\in P(U_{j})$.
\item $\forall j,h\in J:\ P_{U_{j},U_{j}\cap U_{h}}(f_{j})=P_{U_{h},U_{h}\cap U_{j}}(f_{h})$.
\end{enumerate}
\item \label{enu:sheaf }Moreover, we say that $P:\mathbb{T}^{\text{op}}\longrightarrow\textbf{Mod}_{R}$
is a \emph{sheaf} if it is a presheaf satisfying the following conditions;
for any $U\in\mathbb{T}^{\text{op}}$, for any covering $(U_{j})_{j\in J}$
of $U$ in $\mathbb{T}$ and for any $P$-compatible family $(f_{j})_{j\in J}$:
\begin{enumerate}
\item \label{enu:sheaf 1}If $f$, $g\in P(U)$ and $P_{U,U_{j}}(f)=P_{U,U_{j}}(g)$
for all $j\in J$, then $f=g$ (\emph{locality condition}); if $P$
satisfies only this condition, it is called a \emph{separated presheaf}
or a \emph{monopresheaf}.
\item \label{enu:sheaf 2}$\exists f\in P(U)\,\forall j\in J:\ P_{U,U_{j}}(f)=f_{j}$
(\emph{gluing condition}).
\end{enumerate}
\item Finally, if $P$, $Q:\mathbb{T}^{\text{op}}\longrightarrow\textbf{Mod}_{R}$
are sheaves, we say that $\phi:P\ra Q$ is a \emph{sheaf morphism}
if $\phi$ is a natural transformation from $P$ to $Q$, i.e.~it
is a family $\left(\phi_{U}\right)_{U\in\mathbb{T}}$ such that $Q_{U,V}\circ\phi_{U}=\phi_{V}\circ P_{U,V}$
in $\textbf{Mod}_{R}$ for all $U$, $V\in\mathbb{T}$ such that $U\supseteq V$.
\end{enumerate}
\end{defn}

\noindent Clearly, conditions \ref{enu:sheaf 1}, \ref{enu:sheaf 2}
imply $\exists!f\in P(U)\,\forall j\in J:\ P_{UU_{j}}(f)=f_{j}$;
we set $P_{U}\left[\left(f_{j}\right)_{j\in J}\right]:=f$ and call
it the $P$-\emph{gluing of the family $\left(f_{j}\right)_{j\in J}$}.
For example, it is not hard to prove (see e.g.~\cite{McMo94}) that
\begin{align}
P_{UV}\left(P_{U}\left[\left(f_{j}\right)_{j\in J}\right]\right) & =P_{V}\left[\left(P_{U_{j},V\cap U_{j}}(f_{j})\right)_{j\in J}\right],\label{eq:glueFam}\\
\psi_{U}\left(P_{U}\left[\left(f_{j}\right)_{j\in J}\right]\right) & =Q_{U}\left[\left(\psi_{U_{j}}(f_{j})\right)_{j\in J}\right]\quad\text{if }\psi:P\ra Q\text{ is a sheaf morphism.}\label{eq:morphGlue}
\end{align}

\section{\label{sec:distributions}Co-universal property of Schwartz distributions}

\textcolor{black}{In this section, we want to show a co-universal
property of the space of Schwartz's distributions. More precisely,
as stated in \cite{Sch50}, in this section we formalize the idea
that the sheaf $\mathcal{D}'$ of Schwartz distributions is the simplest
sheaf where we can take derivatives of continuous functions }\textcolor{black}{\emph{and}}\textcolor{black}{{}
preserving partial derivatives $\partial_{k}f$ of fun}ctions $f$
which are continuously differentiable in the $k$-th variable. A similar
statement can be found in \cite{Hor90}: ``\emph{In differential
calculus one encounters immediately the unpleasant fact that not every
function is differentiable. The purpose of distribution theory is
to remedy this flaw; indeed, the space of distributions is essentially
}the smallest extension\emph{ of the space of continuous functions
where differentiability is always well defined}''. Co-universal properties
correspond to this informal notion of ``smallest extension''. This
formalization also allows us to understand the importance of preservation
of partial derivative of sufficiently regular functions, which is
not explicitly included in the previous statement. For this reason,
we define
\begin{defn}
\label{def:Calpha}Let $U\subseteq\R^{n}$ be an open set, $\alpha\in\N^{n}$
be a multi-index, and $k=1,\ldots,n$, then:
\begin{enumerate}
\item \label{enu:U_i(x)}For all $x\in U$, we set $U_{k}(x):=\left\{ t\in\R\mid(x_{1},\ldots,x_{k-1},t,x_{k+1},\ldots,x_{n})\in U\right\} $.
We hence have a map $j_{k}:t\in U_{k}(x)\mapsto(x_{1},\ldots,x_{k-1},t,x_{k+1},\ldots,x_{n})\in U$.
\item Let $\alpha=(0,\ptind^{k-1},0,1,0,\ldots,0)=:e_{k}$, and $f\in\mathcal{C}^{0}(U)$.
Then, we write $f\in\mathcal{C}^{\alpha}(U)$ if $f$ is of class
$1$ in the $k$-th variable, i.e.~
\[
\forall x\in U:\ f\circ j_{k}\in\mathcal{C}^{1}\left(U_{k}(x)\right),
\]
and $\partial_{k}f:=(f\circ j_{k})'\in\mathcal{C}^{0}(U)$. The space
$\mathcal{C}^{e_{k}}(U)$ is also denoted by $\mathcal{C}_{k}^{1}(U)$.
\item \label{enu:Calpha}If $\alpha\in\N^{n}$, the set of all the functions
of class $\alpha_{k}$ in the $k$-th variable ($k=1,\ldots,n$) is
\[
\mathcal{C}^{\alpha}(U):=\left\{ f\in\mathcal{C}^{0}(U)\mid\forall k=1,\ldots,n:\ \alpha_{k}\neq0\ \Rightarrow\ f\in\mathcal{C}^{e_{k}}\left(U\right),\ \partial_{k}f\in\mathcal{C}^{\alpha-e_{k}}(U)\right\} .
\]
In the usual way, it is possible to prove that $\mathcal{C}^{\alpha}$
is a sheaf. In case $\alpha=je_{k}$, the space $\mathcal{C}^{\alpha}(U)$
is also denoted by $\mathcal{C}_{k}^{j}(U)$. Note that if $f\in\mathcal{C}^{\alpha}(U)$
and $k$, $j$ are such that $\alpha_{k}$, $\alpha_{j}\neq0$, then
by Schwarz's theorem we have $\partial_{k}\partial_{j}f=\partial_{j}\partial_{k}f$
on $U$.
\item We say that \textcolor{black}{$U$ is an $n$-dimensional interval
if $U=(c_{1}-r,c_{1}+r)\times\ptind^{n}\times(c_{n}-r,c_{n}+r)$ for
some $c\in\R^{n}$ and $r\in\R_{>0}$.}
\end{enumerate}
\end{defn}

\noindent In what follows, the notations $\xymatrix{\mathcal{C}_{k}^{1}\ar@{^{(}->}@<2pt>[r]^{\iota_{k}}\ar@<-2pt>[r]_{\partial_{k}} & \mathcal{C}^{0}}
$, are used to denote the inclusion and the partial derivatives of
$\mathcal{C}_{k}^{1}$-functions (thought of as sheaves morphisms,
e.g.~we think $\iota_{kU}:\mathcal{C}_{k}^{1}(U)\hookrightarrow\mathcal{C}^{0}(U)$
as a natural transformation).
\begin{rem}
\label{rem:Schwartz's-solution}Schwartz's solution leads to the following
objects:
\begin{enumerate}
\item $\mathcal{D}':\left(\R^{n}\right)^{\text{op}}\ra\mathbf{Vect}_{\R}$
is the sheaf of real valued distributions on $\R^{n}$.
\item $\mathcal{C}^{0}\xra{\lambda}\mathcal{D}'$ is the inclusion of the
space of continuous functions into the space of distributions. The
map $\lambda$ is a sheaf morphism, i.e.~it is a natural transformation:
$\lambda_{U}:\mathcal{C}^{0}(U)\ra\mathcal{D}'(U)$ for all open sets
$U$, $V\subseteq\R^{n}$ with $V\subseteq U$, such that the following
diagram commutes 
\[
\xymatrix{\mathcal{C}^{0}(U)\ar[d]_{\mathcal{C}_{U,V}^{0}}\ar[r]^{\lambda_{U}} & \mathcal{D}'(U)\ar[d]^{\mathcal{D}'_{U,V}}\\
\mathcal{C}^{0}(V)\ar[r]_{\lambda_{V}} & \mathcal{D}'(V)
}
\]
Therefore, $\mathcal{C}_{U,V}^{0}(f):=f|_{V}$ and $\mathcal{D}'_{U,V}(T):=T|_{V}$
are the corresponding restrictions.
\item \label{enu:compDer}$\mathcal{D}'\xra{D_{k}}\mathcal{D}'$, for $k=1,\ldots,n$,
are the partial derivatives of distributions. Once again, each $D_{k}$
is a sheaf morphism because $D_{kU}:\mathcal{D}'(U)\ra\mathcal{D}'(U)$
for all open sets $U\subseteq\R^{n}$, and they commute with restrictions
of distributions: $D_{kV}(\mathcal{D}'_{UV}(T))=D_{k}(T|_{V})=\mathcal{D}'_{UV}(D_{kU}(T))=D_{k}(T)|_{V}$
if $V\subseteq\R^{n}$ is open and $V\subseteq U$. Moreover, $D_{kU}$
is compatible with partial derivatives of $\mathcal{C}_{k}^{1}$ functions,
i.e.~$\lambda(\partial_{k}f)=D_{k}(\lambda(f))$ or, specifying all
the domains and inclusions $\lambda_{U}\left(\partial_{kU}f\right)=D_{kU}\left(\lambda_{U}\left(\iota_{kU}(f)\right)\right)$.
In the following, \textcolor{black}{we use the notations $D_{kU}^{j}:=D_{kU}\circ\ptind^{j}\circ D_{kU}$
and $D_{U}^{\alpha}:=D_{1U}^{\alpha_{1}}\circ\ldots\circ D_{nU}^{\alpha_{n}}$
for any multi-index $\alpha\in\N^{n}$ and any open set }$U\subseteq\R^{n}$.
Note explicitly that $D_{U}^{\alpha}(\lambda_{U}(f))=\lambda_{U}(\partial^{\alpha}f)$
if $f\in\mathcal{C}^{\alpha}(U)$.
\item \label{enu:poly}If $\alpha\in\N^{n}$, $f$, $g\in\mathcal{C}^{0}(U)$
and $U$ is an $n$-dimensional interval, then $D_{U}^{\alpha}(\lambda_{U}(f))=D_{U}^{\alpha}(\lambda_{U}(g))$
holds if and only if we can write $f-g=\theta_{1}+\ldots+\theta_{n}$,
where each $\theta_{k}$ is a polynomial in $x_{k}$ of degree $<\alpha_{k}$
whose coefficients are continuous functions on $U$ independent by
$x_{k}$.
\item $D_{h}\circ D_{k}=D_{k}\circ D_{h}$ for all $h$, $k=1,\ldots,n$.
\end{enumerate}
\end{rem}

\begin{thm}
\textcolor{black}{\label{thm:UP of D'}$(\mathcal{D}',\lambda,(D_{k})_{k})$
is a co-universal solution of the problem $\mathcal{P}(H,j,(\delta_{k})_{k})$
given by:}
\begin{enumerate}
\item \textcolor{black}{\label{enu:distrSheaf}$H:\left(\R^{n}\right)^{\text{op}}\ra\mathbf{Vect}_{\R}$
is a sheaf of real vector spaces.}
\item \textcolor{black}{\label{enu:distrMorph}$j:\mathcal{C}^{0}\longrightarrow H$
is a sheaf morphism.}
\item \textcolor{black}{\label{enu:distrComp}$\delta_{k}:H\longrightarrow H$,
$k=1,\ldots,n$, are compatible with partial derivatives of $\mathcal{C}_{k}^{1}$
functions: $\delta_{k}\circ j\circ\iota_{k}=j\circ\partial_{k}$,
i.e.~the following diagram of sheaves morphisms commutes for all
$k=1,\ldots,n$: 
\[
\xymatrix{\mathcal{C}_{k}^{1}\ar@{^{(}->}[r]^{\iota_{k}}\ar[dr]_{\partial_{k}} & \mathcal{C}^{0}\ar[r]^{j} & H\ar[d]^{\delta_{k}}\\
 & \mathcal{C}^{0}\ar[r]^{j} & H
}
\]
}
\item \label{enu:distPoly}Let $\alpha\in\N^{n}$, $f\in\mathcal{C}^{0}(U)$
and $U$ be an \textcolor{black}{$n$-dimensional interval. Assume
that $f=\theta_{1}+\ldots+\theta_{n}$, where each $\theta_{k}$ is
a polynomial in $x_{k}$ of degree $<\alpha_{k}$ whose coefficients
are continuous functions on $U$ independent by $x_{k}$, then $\delta_{U}^{\alpha}(j_{U}(f))=0$.}
\item \label{enu:distrComm}$\delta_{h}\circ\delta_{k}=\delta_{k}\circ\delta_{h}$
for all $h$, $k=1,\ldots,n$.
\end{enumerate}
\noindent \textcolor{black}{The problem is solvable with respect to
the property $\mathcal{Q}(\psi,H,j,(\delta_{k})_{k},\overline{H},\overline{\jmath},(\overline{\delta}_{k})_{k})$
to preserve embeddings and derivatives given by 
\[
\psi:H\longrightarrow\overline{H},\qquad\psi\circ j=\overline{\jmath,}\qquad\psi\circ\delta_{k}=\overline{\delta}_{k}\circ\psi\quad\forall k=1,\ldots,n,
\]
i.e.~when the following diagrams of sheaves morphisms commute}

\textcolor{black}{
\[
\xymatrix{H\ar[d]_{\psi}\ar[r]^{\delta_{k}} & H\ar[d]^{\psi}\\
\overline{H}\ar[r]_{\overline{\delta}_{k}} & \overline{H}
}
\hfill\hfill\xymatrix{\mathcal{C}^{0}\ar[r]^{j}\ar[rd]^{\overline{\jmath}} & H\ar[d]^{\psi}\\
 & \overline{H}
}
\]
Therefore, if $(H,j,(\delta_{k})_{k})$ is any solution }of \ref{enu:distrSheaf}-\ref{enu:distrComp},
th\textcolor{black}{en}

\textcolor{black}{
\begin{equation}
\exists!\psi:\mathcal{D}'\longrightarrow H:\ j=\psi\circ\lambda,\ \psi\circ D_{k}=\delta_{k}\circ\psi\quad\forall k=1,\ldots,n.\label{eq:distrUnique}
\end{equation}
}

\end{thm}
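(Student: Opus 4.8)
The plan is to construct the sheaf morphism $\psi:\mathcal{D}'\to H$ explicitly on a basis of open sets, namely on $n$-dimensional intervals, where every distribution is a finite-order derivative of a continuous function, and then to glue. First I would recall the classical structure theorem in the simple form adapted to intervals: if $U$ is an $n$-dimensional interval, then for every $T\in\mathcal{D}'(U)$ there exist $\alpha\in\N^n$ and $f\in\mathcal{C}^0(U)$ with $T=D^\alpha_U(\lambda_U(f))$. (This follows from the local structure of distributions together with the fact that on an interval one can integrate in each variable to raise regularity.) Given such a representation, I would be \emph{forced} to define
\[
\psi_U(T):=\delta^\alpha_U(j_U(f)),
\]
since property \eqref{eq:distrUnique} demands $j=\psi\circ\lambda$ and $\psi\circ D_k=\delta_k\circ\psi$. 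So the whole content of the existence half is \textbf{well-definedness}: I must show $\delta^\alpha_U(j_U(f))$ does not depend on the chosen pair $(\alpha,f)$. Using Remark~\ref{rem:Schwartz's-solution}.\ref{enu:poly}, two representations $D^\alpha_U(\lambda_U(f))=D^\beta_U(\lambda_U(g))$ can, after raising both to a common multi-index $\gamma\geq\alpha,\beta$ (legitimate because $D_k$'s commute and $\lambda$ intertwines $\partial_k$ with $D_k$ on $\mathcal{C}^1_k$-functions), be reduced to the case $\alpha=\beta=\gamma$ with $f-g=\theta_1+\dots+\theta_n$ a sum of polynomials in $x_k$ of degree $<\gamma_k$ with $x_k$-independent continuous coefficients. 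Then hypothesis~\ref{enu:distPoly} gives exactly $\delta^\gamma_U(j_U(f-g))=0$, and commutativity of the $\delta_k$ (hypothesis~\ref{enu:distrComm}) plus linearity of $j_U$ and the $\delta_k$'s upgrades the equality $D^\alpha_U(\lambda_U f)=D^\beta_U(\lambda_U g)$ to $\delta^\alpha_U(j_U f)=\delta^\beta_U(j_U g)$. I expect this verification — carefully handling the passage to a common $\gamma$ — to be the main obstacle, since it is where all four structural hypotheses on $(H,j,(\delta_k)_k)$ are consumed simultaneously.

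Once $\psi_U$ is well defined on intervals, I would check it is $\R$-linear (clear, since addition of distributions can be represented with a common $\alpha$ by the same raising trick) and that it commutes with the $D_k$ and restricts $\lambda$ to $j$ (immediate from the definition and, for $D_k$, by bumping $\alpha$ to $\alpha+e_k$). Next, compatibility with restrictions between intervals $V\subseteq U$: if $T=D^\alpha_U(\lambda_U f)$ then $T|_V=D^\alpha_V(\lambda_V(f|_V))$ since $\lambda$ and the $D_k$ are sheaf morphisms, so $\psi_V(T|_V)=\delta^\alpha_V(j_V(f|_V))=H_{U,V}(\delta^\alpha_U(j_U f))=H_{U,V}(\psi_U(T))$, using that $j$ and the $\delta_k$ are sheaf morphisms. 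Since $n$-dimensional intervals form a basis of $\R^n$ closed under pairwise intersection, this consistent assignment on the basis extends uniquely to a sheaf morphism $\psi:\mathcal{D}'\to H$: for general open $U$ and $T\in\mathcal{D}'(U)$, cover $U$ by intervals $U_j$, set $\psi_U(T):=H_{U_j}\big[(\psi_{U_j}(T|_{U_j}))_j\big]$ using the gluing in $H$, and invoke \eqref{eq:glueFam}–\eqref{eq:morphGlue} together with the sheaf (monopresheaf) axioms of $\mathcal{D}'$ to see this is independent of the cover and natural. Compatibility with $\lambda$, $j$ and the $D_k$, $\delta_k$ then propagates from the interval case by the locality axiom.

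Finally, uniqueness. Suppose $\psi'$ also satisfies $j=\psi'\circ\lambda$ and $\psi'\circ D_k=\delta_k\circ\psi'$. On any interval $U$ and any $T=D^\alpha_U(\lambda_U f)$ we compute $\psi'_U(T)=\psi'_U(D^\alpha_U\lambda_U f)=\delta^\alpha_U(\psi'_U\lambda_U f)=\delta^\alpha_U(j_U f)=\psi_U(T)$, so $\psi'$ and $\psi$ agree on the basis of intervals; by the locality axiom for $H$ (it is at least a monopresheaf, being a sheaf) they agree on every open set, hence $\psi'=\psi$. This proves \eqref{eq:distrUnique}, and the verification that $(\mathcal{D}',\lambda,(D_k)_k)$ itself satisfies \ref{enu:distrSheaf}–\ref{enu:distrComm} is recorded in Remark~\ref{rem:Schwartz's-solution} (items \ref{enu:compDer}, \ref{enu:poly} and the commutativity of the $D_k$), so $\mathcal{D}'$ is indeed a solution of $\mathcal{P}$ and the co-universal property holds.
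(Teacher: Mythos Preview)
Your overall strategy matches the paper's almost exactly: force the definition $\psi(D^\alpha\lambda f)=\delta^\alpha(j f)$, verify well-definedness by raising to a common multi-index and invoking Remark~\ref{rem:Schwartz's-solution}.\ref{enu:poly} together with hypothesis~\ref{enu:distPoly}, then glue and check uniqueness. The well-definedness argument, the use of commutativity~\ref{enu:distrComm}, and the uniqueness paragraph are all correct and essentially identical to the paper's.

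There is, however, a genuine gap in your first step. You assert that on any $n$-dimensional interval $U$, \emph{every} $T\in\mathcal{D}'(U)$ admits a global representation $T=D_U^\alpha(\lambda_U f)$ with a single $\alpha$ and a single $f\in\mathcal{C}^0(U)$. This is false: distributions on a bounded open interval need not have finite order. For instance, on $(0,1)$ the distribution $T=\sum_{k\ge 2}2^{-k}\delta^{(k)}_{1-1/k}$ is well defined (any test function has support bounded away from $1$, so the sum is finite), but its order on compacta approaching the right endpoint is unbounded, so no single $\alpha$ works. Your parenthetical justification (``integrate in each variable to raise regularity'') does not help: integrating raises regularity of a \emph{given} primitive but cannot produce a uniform finite order where none exists.

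The fix is exactly what the paper does: work not with arbitrary intervals, but with open sets $C$ satisfying $\overline{C}\Subset U$ (relatively compact in the ambient $U$). On such $C$ the local structure theorem does give $T|_C=D_C^\alpha(\lambda_C f)$. The paper then checks well-definedness by covering $C$ by $n$-dimensional intervals contained in $C$ (where your raising-to-a-common-$\gamma$ argument applies verbatim), and finally glues over the cover $B(U)$ of all relatively compact subsets. If you prefer to keep intervals as your basis, simply restrict to those intervals $I$ with $\overline{I}\Subset U$; they still cover $U$ and are closed under intersection, and on each such $I$ the restriction $T|_I$ genuinely has finite order. With that one adjustment your proof goes through.
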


\begin{proof}
\textcolor{black}{We only have to prove \eqref{eq:distrUnique}, because
it is clear from \cite{Sch50} that $(\mathcal{D}',\lambda,(D_{k})_{k})$
is a solution of }\ref{enu:distrSheaf}-\ref{enu:distrComp}\textcolor{black}{.}

\textcolor{black}{Let $U\subseteq\R^{n}$ and let $T\in\mathcal{D}'(U)$.
The key idea to define $\psi_{U}(T)$ is to use the local structure
of distributions to define $\psi_{C}(T|_{C})$ for any $C\subseteq U$
with $\overline{C}\Subset U$, and then to use the gluing property
to define $\psi_{U}(T)$ as the gluing of the compatible family $(\psi_{C}(T|_{C}))_{C}$.}

\textcolor{black}{The local structure theorem of distributions (see
\cite{Sch50}) yields 
\begin{equation}
T|_{C}=D_{C}^{\alpha}(\lambda_{C}(f))\label{eq:local structure}
\end{equation}
for some multi-index $\alpha\in\N^{n}$ and some continuous function
$f\in\mathcal{C}^{0}(C)$. We necessarily have to define
\begin{equation}
\psi_{C}(T|_{C}):=\delta_{C}^{\alpha}\left(j_{C}(f)\right)\in H(C),\label{eq:psi on finite order}
\end{equation}
but we clearly have to prove that this definition does not }depend
on $\alpha$ and $f$ in \eqref{eq:local structure}. Assume that
\begin{equation}
T|_{C}=D_{C}^{\alpha'}\left(\lambda_{C}(g)\right),\label{eq:localStructure2}
\end{equation}
for another $\alpha'\in\mathbb{N}^{n}$ and another $g\in\mathcal{C}^{0}(C)$.
We claim that \textcolor{black}{
\begin{equation}
\delta_{C}^{\alpha}\left(j_{C}(f)\right)=\delta_{C}^{\alpha'}\left(j_{C}(g)\right).\label{eq: independence from rep}
\end{equation}
Indeed, since $\delta^{\alpha}\circ j$ is a sheaf of morphisms, and
since the set of all the n-dimensional intervals included in $C$
is a covering of $C$, it is sufficient to show that }\eqref{eq: independence from rep}\textcolor{black}{{}
holds for any $n$-dimensional interval $C=(c_{1}-r,c_{1}+r)\times\ptind^{n}\times(c_{n}-r,c_{n}+r)$
of center $c\in\R^{n}$ and sides $2r\in\R_{>0}$. We first prove
that we can change the functions $f$ and $g$ so that to have the
same multi-index $\alpha=\alpha'$. Assume e.g.~that $\alpha'_{k}>\alpha_{k}$,
set $a_{k}:=\alpha'_{k}-\alpha_{k}$, and integrate $f$ in the variable
$x_{k}$ for $a_{k}$ times:}
\begin{equation}
\bar{f}(x):=\int_{c_{k}}^{x_{k}}...\int_{c_{k}}^{t_{2}}\int_{c_{k}}^{t_{1}}f(x_{1},\ldots,x_{k-1},t_{0},x_{k+1},\ldots,x_{n})\,\diff t_{0}\,\diff t_{1}...\,\diff t_{a_{k}-1}\quad\forall x\in C.\label{eq:primitive multivariable func}
\end{equation}
The function $\overline{f}$ is well-defined because $C$ is an $n$-dimensional
interval of c\textcolor{black}{enter $c$ and we have $\bar{f}\in\mathcal{C}_{k}^{a_{k}}(C)$
and $\partial_{k}^{a_{k}}\bar{f}=f$. Therefore, using the compatibility
of $D_{k}$ and $\partial_{k}$, we get
\begin{equation}
D_{C}^{\alpha}(\lambda_{C}(f))=D_{C}^{\alpha}(\lambda_{C}(\partial_{k}^{a_{k}}\bar{f}))=D_{C}^{\alpha}(D_{kC}^{a_{k}}(\lambda_{C}\bar{f}))=D_{C}^{\alpha+a_{k}e_{k}}(\lambda_{C}(\bar{f})),\label{eq:incr_a_k}
\end{equation}
and 
\[
\alpha+a_{k}e_{k}=(\alpha_{1},\ldots,\alpha_{k-1},\alpha'_{k},\alpha_{k+1},\ldots,\alpha_{k}).
\]
If $\alpha'_{k}<\alpha_{k}$, we can proceed similarly using \eqref{eq:localStructure2}
instead of \eqref{eq:local structure}. Therefore, for $\bar{\alpha}_{k}:=\max(\alpha_{k},\alpha'_{k})$,
$\alpha_{k}^{f}:=\max(\alpha'_{k}-\alpha_{k},0)$, $\alpha_{k}^{g}:=\max(\alpha{}_{k}-\alpha'_{k},0)$
and for suitable $\bar{f}\in\mathcal{C}^{\alpha^{f}}(C)$, $\bar{g}\in\mathcal{C}^{\alpha^{g}}(C)$,
we have
\[
T|_{C}=D_{C}^{\bar{\alpha}}\left(\lambda_{C}(\bar{f})\right)=D_{C}^{\bar{\alpha}}\left(\lambda_{C}(\bar{g})\right),
\]
i.e.~$D_{C}^{\bar{\alpha}}\left(\lambda_{C}(\bar{f}-\bar{g})\right)=0$.
Therefore, Rem.~\ref{rem:Schwartz's-solution}.\ref{enu:poly} (the
necessary condition part) yields that $\bar{f}-\bar{g}$ can be written
(on $C$) as $\bar{f}-\bar{g}=\theta_{1}+\ldots+\theta_{n}$, where
each $\theta_{k}$ is a polynomial in $x_{k}$ of degree $<\bar{\alpha}_{k}$
whose coefficients are continuous functions on $C$ independent by
$x_{k}$. Property \ref{enu:distPoly} for $(H,j,(\delta_{k})_{k})$
(note explicitly that this condition only states the sufficient part
of Rem.~\ref{rem:Schwartz's-solution}.\ref{enu:poly}) implies $\delta_{C}^{\bar{\alpha}}(j_{C}(\bar{f}-\bar{g}))=0$,
and hence $\delta_{C}^{\bar{\alpha}}(j_{C}(\bar{f}))=\delta_{C}^{\bar{\alpha}}(j_{C}(\bar{g}))$
because we are considering sheaves of vector spaces. Exactly as we
increased $\alpha_{k}$ by $a_{k}$ (if $\alpha'_{k}>\alpha_{k}$)
in \eqref{eq:incr_a_k}, we can now proceed backward to return to
the old multi-index: since $\bar{f}\in\mathcal{C}_{k}^{a_{k}}(C)$
\[
\delta_{C}^{\bar{\alpha}}(j_{C}(\bar{f}))=\delta_{C}^{\bar{\alpha}-a_{k}e_{k}}(\delta_{kC}^{a_{k}}(j_{C}\bar{f}))=\delta_{C}^{\bar{\alpha}-a_{k}e_{k}}(j_{C}(\partial_{k}^{a_{k}}\bar{f})),
\]
and by induction we get $\delta_{C}^{\bar{\alpha}}(j_{C}(\bar{f}))=\delta_{C}^{\alpha}(j_{C}(\partial^{\overline{\alpha}-\alpha}\overline{f}))=\delta_{C}^{\alpha}(j_{C}(f))$.
This proves that $\delta_{C}^{\alpha}\left(j_{C}(f)\right)=\delta_{C}^{\alpha'}\left(j_{C}(g)\right)$,
and hence our claim is proved.}

We denote by $B(U)$ the set of all the relatively compact sets of
$U$, which is, by the local structure of distributions, a covering
of $U$. The family $\left(\psi_{C}(T|_{C})\right)_{C\in B(U)}$ is
a compatible one. In fact $H_{C',C'\cap C}\left(\psi_{C'}(T|_{C'})\right)=H_{C',C'\cap C}\left(\delta_{C'}^{\alpha}(j_{C'}(f))\right)=\delta_{C'\cap C}^{\alpha}\left(j_{C'\cap C}(f)\right)=H_{C,C\cap C'}\left(\psi_{C}(T|_{C})\right)$,
and we can hence set
\[
\psi_{U}(T):=H_{U}\left[\left(\psi_{C}(T|_{C})\right)_{C\in B(U)}\right]\quad\forall T\in\mathcal{D}'(U).
\]
We claim that if $T:=D_{U}^{\alpha}(\lambda_{U}(f))$ for some $\alpha\in\N^{n}$
and for some $f\in\mathcal{C}^{0}(U)$, then $\psi_{U}(T)=\delta_{U}^{\alpha}(j_{U}(f))$.
Indeed, for any $V\in B(U)$ we have 
\begin{multline*}
H_{U,V}(\psi_{U}(T))=H_{U,V}\left(H_{U}\left[\left(\delta_{C}^{\alpha}(j_{C}(f))\right)_{C\in B(U)}\right]\right)\\
=H_{V}\left[\left(\delta_{C\cap V}^{\alpha}(j_{C\cap V}(f))\right)_{C\in B(U))}\right]=\delta_{V}^{\alpha}(j_{V}(f))=H_{U,V}(\delta_{U}^{\alpha}(j_{U}(f))).
\end{multline*}
where we used \eqref{eq:glueFam} in the second equality. Thus, by
the locally condition of $H$ (see Def.~\ref{def:sheaf}.\ref{enu:sheaf }.\eqref{enu:sheaf 1}),
our claim is proved. It follows in particular that $\psi_{U}(\lambda_{U}(f))=\delta_{U}^{0}(j_{U}(f))=j_{U}(f)$
for all $f\in\mathcal{C}^{0}(U)$.

If $C$, $C'\in B(U)$ are such that $C'\subseteq C$ then for any
$T\in\mathcal{D}'(U)$
\begin{align}
H_{C,C'}\left(\psi_{C}(T|_{C})\right) & =H_{C,C'}\left(\delta_{C}^{\alpha}(j_{C}(f))\right)=\delta_{C'}^{\alpha}\left(j_{C'}(f)\right)=\psi_{C'}\left(D_{C'}^{\alpha}\left(\lambda_{C'}(f)\right)\right)\label{eq:sheaf 11}\\
 & =\psi_{C'}(T|_{C'})
\end{align}
because $\delta^{\alpha}\circ j$ is sheaf morphism. Thus, for any
$V\subseteq U$, \eqref{eq:sheaf 11} together with \eqref{eq:glueFam}
imply that 
\begin{align*}
H_{U,V}\left(\psi_{U}(T)\right) & =H_{U,V}\left(H_{U}\left[\left(\psi_{C}(T|_{C})\right)_{C\in B(U)}\right]\right)\\
 & =H_{V}\left[\left(H_{C,V\cap C}(\psi_{C}(T|_{C}))\right)_{C\in B(U)}\right]\\
 & =H_{V}\left[\left(\psi_{V\cap C}(T|_{V\cap C})\right)_{C\in B(U)}\right]=H_{V}\left[\left(\psi_{D}(T|_{D})\right)_{D\in B(V)}\right].
\end{align*}
where the latter equality follows from the fact that $H$ is sheaf
morphism, and the fact that the families $\left(\psi_{C\cap V}(T|_{C\cap V})\right)_{C\in B(U)}$,
$\left(\psi_{D}(T|_{D})\right)_{D\in B(V)}$ are compatible and locally
equal. Therefore $\psi:\mathcal{D}'\ra H$ is a sheaf morphism. To
prove the equality $\psi\circ D_{k}=\delta_{k}\circ\psi$, we have
\begin{align*}
\psi_{U}(D_{kU}(T)) & =H_{U}\left[\left(\psi_{C}(D_{kU}T)|_{C}\right)_{C\in B(U)}\right]=H_{U}\left[\left(\psi_{C}(D_{kC}(T|_{C}))\right)_{C\in B(U)}\right]\\
 & =H_{U}\left[\left(\delta_{kC}(\psi_{C}(T|_{C})\right)_{C\in B(U)}\right]=\delta_{kU}(\psi_{U}(T)),
\end{align*}
where we used the equality 
\[
\psi_{C}(D_{kC}(T|_{C})=\psi_{C}(D_{C}^{e_{k}+\alpha}(\lambda_{C}(f)))=\delta_{kC}\delta_{C}^{\alpha}(j_{C}(f))=\delta_{kC}\psi_{C}(T|_{C})
\]
for some continuous function $f\in\mathcal{C}(U)$ and multi-index
$\alpha\in\N^{n}$. \textcolor{black}{Note explicitly that in the
step $\delta_{C}^{\alpha+e_{k}}=\delta_{kC}\circ\delta_{C}^{\alpha}$
above we need the commutativity property \ref{enu:distrComm}.}

It remains to prove the uniqueness. Assume that also $\bar{\psi}$
satisfies \eqref{eq:distrUnique}; let $C\in B(U)$ and let $f$ and
$\alpha$ be such that $T|_{C}=D_{C}^{\alpha}(\lambda_{C}(f))$, then
\[
\bar{\psi}_{C}(T|_{C})=\bar{\psi}_{C}(D_{C}^{\alpha}(\lambda_{C}(f)))=\delta_{C}^{\alpha}(\bar{\psi}_{C}(\lambda_{C}(f)))=\delta_{C}^{\alpha}(j_{C}(f))=\psi_{C}(T|_{C}).
\]
Therefore, property \eqref{eq:morphGlue} yields
\begin{multline*}
\bar{\psi}_{U}(T)=\bar{\psi}_{U}\left(\mathcal{D}'_{U}\left[\left(T|_{C}\right)_{C\in B(U)}\right]\right)=H_{U}\left[\left(\bar{\psi}_{C}(T|_{C})\right)_{C\in B(U)}\right]=\\
H_{U}\left[\left(\psi_{C}(T|_{C})\right)_{C\in B(U)}\right]=\psi_{U}(T).
\end{multline*}
\end{proof}
Using a categorical language, the universal property Thm.~\ref{thm:UP of D'}
(and the general Thm.~\ref{thm:UPuniqueUpToIso}) corresponds to
the axiomatic characterization of distributions given by Sebastiao
e Silva in \cite{SeS61,SeS64}. However, note that Thm.~\ref{thm:UP of D'}
yields a characterization up to isomorphisms of the whole sheaf of
distributions, not only those defined locally as in \cite{SeS61,SeS64}.
Moreover, note that the universal property allows one to avoid both
the axiom of local structure of distributions \cite[Axiom~3]{SeS64},
and the necessary condition of Rem.~\ref{rem:Schwartz's-solution}.\ref{enu:poly}
(see \cite[Axiom~4]{SeS64}). In fact, we have the following
\begin{cor}
\label{cor:locStr-Poly}If $(H,j,(\delta_{k})_{k})$ is a co-universal
solution of the problem stated in Thm.~\ref{thm:UP of D'}, then
\begin{enumerate}
\item \label{enu:locStr}If $U\subseteq\R^{n}$ and $C$ is a relatively
compact set of $U$, then $\forall w\in H(U)\,\exists\alpha\in\N^{n}\,\exists f\in\mathcal{C}^{0}(C):\ w|_{C}=\delta_{C}^{\alpha}(j_{C}(f))$.
\item \label{enu:polyNec}If $\alpha\in\N^{n}$, $f$, $g\in\mathcal{C}^{0}(U)$,
$U$ is an $n$-d\textcolor{black}{imensional interval, and $\delta_{U}^{\alpha}(j_{U}(f))=\delta_{U}^{\alpha}(j_{U}(g))$
then we can write $f-g=\theta_{1}+\ldots+\theta_{n}$, where each
$\theta_{k}$ is a polynomial in $x_{k}$ of degree $<\alpha_{k}$
whose coefficients are continuous functions on $U$ independent by
$x_{k}$.}
\end{enumerate}
\end{cor}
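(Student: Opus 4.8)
The plan is to exploit that, by Thm.~\ref{thm:UP of D'}, $(\mathcal{D}',\lambda,(D_{k})_{k})$ is \emph{itself} a co-universal solution of $\mathcal{P}$ with respect to $\mathcal{Q}$; hence the hypothesis that $(H,j,(\delta_{k})_{k})$ is another such solution, combined with Thm.~\ref{thm:UPuniqueUpToIso}, yields that $\mathcal{D}'$ and $H$ are isomorphic in $\mathbf{C}(\mathcal{P},\mathcal{Q})$. First I would unwind what this isomorphism is: a sheaf morphism $\psi:\mathcal{D}'\ra H$ each of whose components $\psi_{U}:\mathcal{D}'(U)\ra H(U)$ is a bijection commuting with restrictions, $H_{U,V}\circ\psi_{U}=\psi_{V}\circ\mathcal{D}'_{U,V}$, and satisfying $\psi\circ\lambda=j$ and $\psi\circ D_{k}=\delta_{k}\circ\psi$ for all $k$. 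Iterating the last relation gives $\psi_{U}\circ D_{U}^{\alpha}=\delta_{U}^{\alpha}\circ\psi_{U}$ for every $\alpha\in\N^{n}$, so that $\psi_{U}(D_{U}^{\alpha}(\lambda_{U}(f)))=\delta_{U}^{\alpha}(j_{U}(f))$ for every $f\in\mathcal{C}^{0}(U)$. Both claims then reduce to transporting known facts about $\mathcal{D}'$ along $\psi$.

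For \ref{enu:locStr}: given $w\in H(U)$, surjectivity of $\psi_{U}$ produces $T\in\mathcal{D}'(U)$ with $\psi_{U}(T)=w$; the local structure theorem of distributions (\cite{Sch50}) gives $\alpha\in\N^{n}$ and $f\in\mathcal{C}^{0}(C)$ with $T|_{C}=D_{C}^{\alpha}(\lambda_{C}(f))$; then naturality of $\psi$ together with the displayed identity give $w|_{C}=H_{U,C}(\psi_{U}(T))=\psi_{C}(T|_{C})=\psi_{C}(D_{C}^{\alpha}(\lambda_{C}(f)))=\delta_{C}^{\alpha}(j_{C}(f))$. For \ref{enu:polyNec}: from $\delta_{U}^{\alpha}(j_{U}(f))=\delta_{U}^{\alpha}(j_{U}(g))$ and the same identity we obtain $\psi_{U}(D_{U}^{\alpha}(\lambda_{U}(f)))=\psi_{U}(D_{U}^{\alpha}(\lambda_{U}(g)))$, and injectivity of $\psi_{U}$ gives $D_{U}^{\alpha}(\lambda_{U}(f))=D_{U}^{\alpha}(\lambda_{U}(g))$ in $\mathcal{D}'(U)$; since $U$ is an $n$-dimensional interval, the ``only if'' part of Rem.~\ref{rem:Schwartz's-solution}.\ref{enu:poly} furnishes the decomposition $f-g=\theta_{1}+\ldots+\theta_{n}$ of the required form.

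I do not expect a genuine obstacle: the analytic content of both items already lies in the local structure theorem and in Rem.~\ref{rem:Schwartz's-solution}.\ref{enu:poly}, and the corollary is exactly the observation that the universal characterization makes these hypotheses redundant. The only points needing care are (a) that Thm.~\ref{thm:UPuniqueUpToIso} is being applied to get an isomorphism \emph{of sheaves} (so that every $\psi_{U}$ is bijective, not merely that $\psi$ is epi/mono in some abstract sense): here one uses that $H$ being co-universal supplies an arrow $H\ra\mathcal{D}'$ opposite to the $\psi$ of Thm.~\ref{thm:UP of D'}, and that the two composites are the identities; and (b) bookkeeping of the naturality square when restricting along a relatively compact $C\subseteq U$, which is what turns $\psi_{U}(T)|_{C}$ into $\psi_{C}(T|_{C})$.
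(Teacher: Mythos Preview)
Your proof is correct and follows essentially the same route as the paper: invoke Thm.~\ref{thm:UPuniqueUpToIso} to obtain a sheaf isomorphism $\psi:\mathcal{D}'\ra H$ intertwining $(\lambda,D_{k})$ with $(j,\delta_{k})$, then transport the local structure theorem and the necessary part of Rem.~\ref{rem:Schwartz's-solution}.\ref{enu:poly} across $\psi$. Your write-up is in fact more explicit than the paper's about the naturality bookkeeping and the componentwise bijectivity of $\psi$, but the argument is the same.
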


\begin{proof}
In fact, Thm.~\ref{thm:UPuniqueUpToIso} yields an isomorphism $\psi:\mathcal{D}'\longrightarrow H$
which preserves derivatives $\psi\circ D_{k}=\delta_{k}\circ\psi$
and embeddings $j=\psi\circ\lambda$. Therefore, the equalities $\psi^{-1}(w|_{C})=D_{C}^{\alpha}(\lambda_{C}(f))$
and $\delta_{U}^{\alpha}(j_{U}(f))=\delta_{U}^{\alpha}(j_{U}(g))$
are equivalent to $w|_{C}=\delta_{C}^{\alpha}(j_{C}(f))$ and $D_{U}^{\alpha}(\lambda_{U}(f))=D_{U}^{\alpha}(\lambda_{U}(g))$.
The claims then follow from similar properties of $(\mathcal{D}',\lambda,(D_{k})_{k})$.
\end{proof}

\subsection{\label{subsec:Application}Application: Sebastiao e Silva algebraic
definition of distributions}

In the study of universal properties, it frequently happens that this
characterization (up to isomorphisms) suggests possible generalizations.
For distributions, these ideas are actually already contained in the
proof of Thm.~\ref{thm:UP of D'}, but we prefer to explain them
using the thoughts of Sebastiao e Silva, see \cite{SeS61,SeS64}.
Assume that the open set $I$ is an $n$-dimensional interval $I=(c_{1}-r,c_{1}+r)\times\ptind^{n}\times(c_{n}-r,c_{n}+r)$.
For each continuous function $f\in\mathcal{C}^{0}(I)$ and each $k=1,\ldots,n$,
we can consider any primitive of $f$ with respect to the variable
$x_{k}$, e.g.~setting
\begin{equation}
\mathfrak{I}_{k}f(x):=\int_{c_{k}}^{x_{k}}f(x_{1},\ldots,x_{k-1},t,x_{k+1},\ldots,x_{n})\,\diff t,\label{eq:I_k}
\end{equation}
and, more generally, $\mathfrak{I}^{\alpha}:=\mathfrak{I}_{1}^{\alpha_{1}}\circ\ldots\circ\mathfrak{I}_{n}^{\alpha_{n}}$
for $\alpha\in\N^{n}$, so that $\partial^{\beta}\mathfrak{J}^{\alpha}f=\mathfrak{I}^{\alpha-\beta}f$
if $\alpha\ge\beta$ and $f\in\mathcal{C}^{0}(I)$. Assume that $f$,
$g\in\mathcal{C}^{0}(I)$, $r$, $s\in\N^{n}$, and $D^{r}f=D^{s}g$
(in the sense of distributions; for simplicity, here we omit the dependence
on the open set $I$ and we identify $\lambda_{I}(f)$ with $f$).
Set $m:=\max(r,s)$, then the compatibility property Thm.~\ref{thm:UP of D'}.\ref{enu:distrComp}
yields $D^{r}f=D^{r}(\partial^{m-r}(\mathfrak{I}^{m-r}f))=D^{m}(\mathfrak{I}^{m-r}f)=D^{m}(\mathfrak{I}^{m-s}g)=D^{s}g$.
Therefore, Cor.~\ref{cor:locStr-Poly}.\ref{enu:polyNec} yields
\textcolor{black}{$\mathfrak{I}^{m-r}f-\mathfrak{I}^{m-s}g=\theta_{1}+\ldots+\theta_{n}$,
where each $\theta_{k}$ is a polynomial in $x_{k}$ of degree $<m_{k}$
whose coefficients are continuous functions on $I$ independent by
$x_{k}$. Denote by $\mathcal{P}_{m}$ the set of all the functions
$\theta$ of this form $\theta=\theta_{1}+\ldots+\theta_{n}$. Therefore,
we proved that}
\begin{equation}
D^{r}f=D^{s}g\ \iff\ \mathfrak{I}^{m-r}f-\mathfrak{I}^{m-s}g\in\mathcal{P}_{m},\ \text{where }m:=\max(r,s).\label{eq:SebSilIdea}
\end{equation}
The main idea of \cite{SeS61,SeS64} is that a condition such as the
right hand side of \eqref{eq:SebSilIdea} can be stated for pair of
continuous functions without any need to use methods of functional
analysis, but only using a \emph{formal} algebraic approach: we can
say that the derivative $D^{r}f$ of a continuous function $f\in\mathcal{C}^{0}(I)$
is simply a formal operation corresponding to the pair $(r,f)$, and
two pairs are equivalent if the right hand side of \eqref{eq:SebSilIdea}
holds. Therefore, if $I$ is an $n$-dimensional interval, we can
define: $(r,f)\sim(s,g)$ if $r$, $s\in\N^{n}$, $f$, $g\in\mathcal{C}^{0}(I)$
and $\mathfrak{I}^{m-r}f-\mathfrak{I}^{m-s}g\in\mathcal{P}_{m}$,
where $m:=\max(r,s)$; $\mathcal{D}_{\text{f}}'(I):=(\N^{n}\times\mathcal{C}^{0}(I))/\sim$;
$\lambda_{I}(f):=[(0,f)]_{\sim}$; $D_{k}([(r,f)]_{\sim}):=\left[(r+e_{k},f)\right]_{\sim}$,
so that $D^{r}f=[(r,f)]_{\sim}\in\mathcal{D}_{\text{f}}'(I)$; finally,
the vector space operations are defined as $D^{r}f+D^{s}g:=D^{m}(\mathfrak{I}^{m-r}f+\mathfrak{I}^{m-s}g)$
($m:=\max(r,s)$) and $\mu\cdot D^{r}f:=D^{r}(\mu f)$ for all $\mu\in\R$;
the restriction to another $n$-dimensional interval $J\subseteq I$
is defined by $\left(D^{r}f\right)|_{J}:=D^{r}(f|_{J})$. With these
definitions we obtain a functor
\begin{equation}
\mathcal{D}'_{\text{f}}:\mathcal{I}(\R^{n})^{\text{op}}\ra\mathbf{Vect}_{\R},\label{eq:functPoset}
\end{equation}
where $\mathcal{I}(U)$ is the poset of all the $n$-dimensional intervals
contained in the open set $U\subseteq\R^{n}$. Clearly, $\mathcal{I}(\R^{n})$
is not a topological space, but it is a base for the Euclidean topology
of $\R^{n}$, and this suffices to apply a general co-universal method
(called \emph{sheafification}, see \cite{Bor94,KaSc}) to associate
a sheaf $\mathcal{D}':(\R^{n})^{\text{op}}\ra\mathbf{Vect}_{\R}$
to $\mathcal{D}'_{\text{f}}$: this corresponds to the intuitive idea
that any distribution is obtained by gluing a compatible family, where
each element of the family is the (distributional) derivative of a
continuous function. We first use distribution theory as an example
to motivate sheafification in this case, but then we introduce this
construction in general terms as another example to solve a problem
in the simplest way.

For an arbitrary $T\in\mathcal{D}'(U)$, $U\subseteq\R^{n}$ being
an open set, we can consider all the possible intervals $I\in\mathcal{I}(U)$
such that $T|_{I}$ is in $\mathcal{D}'_{\text{f}}(I)$:
\begin{equation}
\mathcal{B}(T):=\left\{ I\in\mathcal{I}(U)\mid T|_{I}\in\mathcal{D}'_{\text{f}}(I)\right\} .\label{eq:BT}
\end{equation}
By the local structure of distributions, and the fact that $\mathcal{I}(U)$
is a base, we have that $\mathcal{B}(T)$ is a covering of $U$. Intuitively,
among all the possible coverings of $U$ made of intervals, $\mathcal{B}(T)$
is the largest one (e.g.~it surely contains all the $I\in\mathcal{I}(U)$
such that $\bar{I}\subseteq U$ where the local structure theorem
applies). We start by understanding how to formalize this idea that
$\mathcal{B}(T)$ is ``the largest one'' because this would allow
us to use only the separateness of $\mathcal{D}'_{\text{f}}(-)$ and
an arbitrary $\mathcal{B}(T)$-indexed compatible family such as $\left(T|_{I}\right)_{I\in\mathcal{B}(T)}$.
\begin{rem}
\label{rem:sepComFctr}Separateness and being a compatible family
can clearly be formulated also for a functor of the type \eqref{eq:functPoset}:
\begin{enumerate}
\item We say that $\mathcal{D}'_{\text{f}}$ is \emph{separated} because
if $T$, $S\in\mathcal{D}'_{\text{f}}(I)$, $I\in\mathcal{I}(\R^{n})$,
and $(I_{j})_{j\in J}$ is a covering of $I$ made of intervals such
that $T|_{I_{j}}=S|_{I_{j}}$ for all $j\in J$, then $T=S$.
\item For all $I\in\mathcal{B}(T)$, we have $T|_{I}\in\mathcal{D}'_{\text{f}}(I)$;
moreover, $\left(T|_{I}\right)|_{K}=\left(T|_{J}\right)|_{K}$ for
all $I$, $J\in\mathcal{B}(T)$ and all $K\in\mathcal{I}(\R^{n})$
such that $K\subseteq I\cap J$, i.e.~$\left(T|_{I}\right)_{I\in\mathcal{B}(T)}$
is a \emph{compatible family}.
\end{enumerate}
\end{rem}

Now, let $S\in\mathcal{D}'_{\text{f}}(J)$, $J\in\mathcal{I}(U)$;
assume that $S$ is \emph{locally equal to} $\left(T|_{I}\right)_{I\in\mathcal{B}(T)}$,
i.e.~it satisfies
\begin{equation}
\forall I\in\mathcal{B}(T)\,\forall K\in\mathcal{I}(\R^{n}):\ K\subseteq I\cap J\ \Rightarrow\ S|_{K}=T|_{K},\label{eq:locEq}
\end{equation}
then by the sheaf property of $\mathcal{D}'$, we have $S=T|_{J}$
and hence $J\in\mathcal{B}(T)$: in these general sheaf-theoretical
terms the covering $\mathcal{B}(T)$ is the largest one. It clearly
also holds the opposite implication: if $J\in\mathcal{B}(T)$, then
$S:=T|_{J}$ satisfy \eqref{eq:locEq}. We write $S=_{J}\left(T|_{I}\right)_{I\in\mathcal{B}(T)}$
if \eqref{eq:locEq} holds, so that
\[
\mathcal{B}(T)=\left\{ J\in\mathcal{I}(U)\mid\exists S\in\mathcal{D}'_{\text{f}}(J):\ S=_{J}\left(T|_{I}\right)_{I\in\mathcal{B}(T)}\right\} .
\]
Intuitively, we can say that the distribution $T$ can be identified
with the family $\left(T|_{I}\right)_{I\in\mathcal{B}(T)}$ defined
on the largest possible domain (in this sense, we expect it is co-universal).

All this motivates the following general
\begin{defn}
\label{def:locEQMaxFam}Let $\mathcal{I}$ be a base for the topological
space $\mathbb{T}$, $P:\mathcal{I}^{\text{op}}\ra\mathbf{Vect}_{\R}$
a functor, $U\in\mathbb{T}$, $\mathcal{B}\subseteq\mathcal{I}$ be
a covering of $U$, $J\in\mathcal{I}$, $S\in P(J)$, and $(T_{I})_{I\in\mathcal{B}}$
a $P$-compatible family. Then, we write $S=_{J}(T_{I})_{I\in\mathcal{B}}$
and we say $S$ \emph{locally equals }$(T_{I})_{I\in\mathcal{B}}$
\emph{on }$J$ if and only if 
\[
\forall I\in\mathcal{B}\,\forall K\in\mathcal{I}:\ K\subseteq I\cap J\ \Rightarrow\ P_{J,K}(S)=P_{I,K}(T_{I}).
\]
Moreover, we say that $(T_{I})_{I\in\mathcal{B}}$ \emph{is a maximal
family on} $U$ if and only if
\begin{enumerate}
\item $(T_{I})_{I\in\mathcal{B}}$ is a compatible family
\item $\forall J\in\mathcal{I}\,\forall S\in P(J):\ S=_{J}(T_{I})_{I\in\mathcal{B}}\ \Rightarrow\ J\in\mathcal{B},\ S=T_{J}$.
\end{enumerate}
\end{defn}

The separateness of $P$ is used in the following result, that allows
us to consider the maximal family generated by a given compatible
family. The idea is to consider all the section $S\in P(J)$ of the
presheaf that locally equals the given family.
\begin{thm}
\label{thm:sheafification1}Let $\mathcal{I}$ be a base for the topological
space $\mathbb{T}$, $P:\mathcal{I}^{\text{op}}\ra\mathbf{Vect}_{\R}$
a separated functor, $\mathcal{B}\subseteq\mathcal{I}$ be a covering
of $U\in\mathbb{T}$ and let $(T_{I})_{I\in\mathcal{B}}$ be a compatible
family. Set 
\begin{equation}
\bar{\mathcal{B}}:=\left\{ J\in\mathcal{I}\mid\exists S\in P(J):\ S=_{J}(T_{I})_{I\in\mathcal{B}}\right\} \label{eq:barB}
\end{equation}
then, we have
\begin{enumerate}
\item \label{enu:barT}$\forall J\in\bar{\mathcal{B}}\,\exists!\,\bar{T}\in P(J):\ \bar{T}=_{J}(T_{I})_{I\in\mathcal{B}}.$
We denote by $\bar{T}_{J}$ this unique $\overline{T}$.
\item \label{enu:barT-ext}$\forall I\in\mathcal{B}:\ I\in\bar{\mathcal{B}}$
and $\bar{T}_{I}=T_{I}$.
\item \label{enu:barT-max}$(\bar{T}_{I})_{I\in\bar{\mathcal{B}}}$ is a
maximal family on $U$.
\end{enumerate}
\end{thm}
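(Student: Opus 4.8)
The plan is to prove \ref{enu:barT}, \ref{enu:barT-ext}, \ref{enu:barT-max} in that order, since \ref{enu:barT} is the engine for the rest. For \ref{enu:barT}, existence of $\bar{T}\in P(J)$ with $\bar{T}=_{J}(T_{I})_{I\in\mathcal{B}}$ is exactly the membership condition $J\in\bar{\mathcal{B}}$ in \eqref{eq:barB}, so only uniqueness requires work. Suppose $\bar{T},\bar{T}'\in P(J)$ both locally equal $(T_{I})_{I\in\mathcal{B}}$ on $J$. By Def.~\ref{def:locEQMaxFam}, for every $I\in\mathcal{B}$ and every $K\in\mathcal{I}$ with $K\subseteq I\cap J$ we then have $P_{J,K}(\bar{T})=P_{I,K}(T_{I})=P_{J,K}(\bar{T}')$. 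Since $\mathcal{B}$ covers $U$ and $\mathcal{I}$ is a base, the collection of all such $K$ is a covering of $J$ (for a point of $J$, pick $I\in\mathcal{B}$ containing it, then a base element $K$ squeezed between the point and $I\cap J$), so the separateness of $P$ forces $\bar{T}=\bar{T}'$; this makes $\bar{T}_{J}$ well defined.

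For \ref{enu:barT-ext}, fix $I_{0}\in\mathcal{B}$ and take $S:=T_{I_{0}}\in P(I_{0})$. For every $I\in\mathcal{B}$ and every $K\in\mathcal{I}$ with $K\subseteq I\cap I_{0}$, compatibility of the family $(T_{I})_{I\in\mathcal{B}}$ (in the base-functor sense of Rem.~\ref{rem:sepComFctr}) gives $P_{I_{0},K}(T_{I_{0}})=P_{I,K}(T_{I})$, which is precisely $T_{I_{0}}=_{I_{0}}(T_{I})_{I\in\mathcal{B}}$. Hence $I_{0}\in\bar{\mathcal{B}}$, and the uniqueness established in \ref{enu:barT} yields $\bar{T}_{I_{0}}=T_{I_{0}}$. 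In particular $\mathcal{B}\subseteq\bar{\mathcal{B}}$.

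For \ref{enu:barT-max}, first $\bar{\mathcal{B}}$ is a covering of $U$ because $\mathcal{B}\subseteq\bar{\mathcal{B}}$. Compatibility of $(\bar{T}_{I})_{I\in\bar{\mathcal{B}}}$ is shown by the same separateness argument: given $I,J\in\bar{\mathcal{B}}$ and $K\in\mathcal{I}$ with $K\subseteq I\cap J$, for each point of $K$ lying in $U$ choose $L\in\mathcal{B}$ containing it and a base element $K'$ with the point $\in K'\subseteq K\cap L$; then $P_{I,K'}(\bar{T}_{I})=P_{L,K'}(T_{L})=P_{J,K'}(\bar{T}_{J})$ since $\bar{T}_{I}=_{I}(T_{L})_{L\in\mathcal B}$ and $\bar{T}_{J}=_{J}(T_{L})_{L\in\mathcal B}$; these $K'$ cover $K$, so separateness gives $P_{I,K}(\bar{T}_{I})=P_{J,K}(\bar{T}_{J})$. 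Finally, for the maximality clause, suppose $S\in P(J)$ satisfies $S=_{J}(\bar{T}_{I})_{I\in\bar{\mathcal{B}}}$. Restricting the defining condition to the indices $L\in\mathcal{B}\subseteq\bar{\mathcal{B}}$ and using $\bar{T}_{L}=T_{L}$ from \ref{enu:barT-ext}, we get $P_{J,K}(S)=P_{L,K}(T_{L})$ for all $L\in\mathcal{B}$ and all $K\in\mathcal{I}$ with $K\subseteq L\cap J$, i.e.\ $S=_{J}(T_{L})_{L\in\mathcal{B}}$; hence $J\in\bar{\mathcal{B}}$ by \eqref{eq:barB}, and $S=\bar{T}_{J}$ by the uniqueness in \ref{enu:barT}.

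The only genuinely delicate point is the repeated invocation of separateness in \ref{enu:barT} and in the compatibility part of \ref{enu:barT-max}: one must check that the small base elements $K\subseteq I\cap J$ (respectively $K'\subseteq K\cap L$) really do cover the larger index, which is where the hypotheses ``$\mathcal{B}$ covers $U$'' and ``$\mathcal{I}$ is a base'' do their work and where the separateness of $P$ is essential. Everything else is a straightforward unwinding of Def.~\ref{def:locEQMaxFam}; in particular the maximality clause needs neither separateness nor any covering argument, only $\mathcal{B}\subseteq\bar{\mathcal{B}}$ together with part \ref{enu:barT}.
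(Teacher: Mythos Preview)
Your proof is correct and is precisely a fleshed-out version of the paper's own argument, which merely says ``use \eqref{eq:barB} and the separateness of $P$'' for \ref{enu:barT}, ``use the assumption that $(T_{I})_{I\in\mathcal{B}}$ is a compatible family'' for \ref{enu:barT-ext}, and ``use \eqref{eq:barB} and Def.~\ref{def:locEQMaxFam}'' for \ref{enu:barT-max}. You have also correctly isolated the one nontrivial point the paper leaves implicit, namely that the base elements $K\subseteq I\cap J$ (resp.\ $K'\subseteq K\cap L$) really cover $J$ (resp.\ $K$), which is where ``$\mathcal{B}$ covers $U$'' and ``$\mathcal{I}$ is a base'' are used.
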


\begin{proof}
To prove \ref{enu:barT} simply use \eqref{eq:barB} and the separateness
of $P$. To prove \ref{enu:barT-ext} use the assumption that $(T_{I})_{I\in\mathcal{B}}$
is a compatible family. To prove \ref{enu:barT-max} use \eqref{eq:barB}
and Def.~\ref{def:locEQMaxFam}.
\end{proof}
\noindent We use the notation $\text{max}[(T_{I})_{I\in\mathcal{B}}]:=(\bar{T}_{I})_{I\in\bar{\mathcal{B}}}$,
and we can now define the sheaf $\overline{P}$ on objects:
\begin{defn}
If $U\in\mathbb{T}$, set $(T_{I})_{I\in\mathcal{B}}\in\overline{P}(U)$
if and only if
\begin{enumerate}
\item $\mathcal{B}\subseteq\mathcal{I}$ is a covering of $U$;
\item $(T_{I})_{I\in\mathcal{B}}$ is a maximal family on $U$.
\end{enumerate}
\end{defn}

\noindent To eventually get an $R$ module (which is the case of real-valued
distributions), we also have to define module operations:
\begin{defn}
Let $U\in\mathbb{T}$, $r\in R$ and let $(T_{I})_{I\in\mathcal{B}}$,
$(S_{J})_{J\in\mathcal{C}}\in\overline{P}(U)$. Then
\begin{enumerate}
\item $(T_{I})_{I\in\mathcal{B}}+(S_{J})_{J\in\mathcal{C}}:=\text{max}[(T_{A}+S_{A})_{A\in\mathcal{B}\cap\mathcal{C}}]$,
where $\mathcal{B}\cap\mathcal{C}:=\{I\cap J\mid I\subseteq\mathcal{B},\,J\subseteq\mathcal{C}\}$
which is clearly a covering of $U$; clearly the family $(T_{A}+S_{A})_{A\in\mathcal{B}\cap\mathcal{C}}$
is a compatible one.
\item $r\cdot(T_{I})_{I\in\mathcal{B}}:=\text{max}[(r\cdot T_{I})_{I\in\mathcal{B}}]$.
\end{enumerate}
\end{defn}

\noindent Using these operations, it is possible to prove that $(\overline{P}(U),+,\cdot)\in\mathbf{Mod}_{R}$.
We still use the symbol $\overline{P}(U)$ to denote this $R$-module.
We finally define $\overline{P}$ on arrows.
\begin{defn}
Let $U$, $V\in\mathbb{T}$, $V\subseteq U$. Then
\begin{enumerate}
\item $\mathcal{C}_{\subseteq V}:=\{J\subseteq V\mid J\in\mathcal{C}\}$
where $\mathcal{C}\subseteq\mathcal{I}$ is a covering of $U$.
\item $\overline{P}_{UV}:(T_{I})_{I\in\mathcal{C}}\in\overline{P}(U)\longmapsto(P_{IJ}(T_{I}))_{J\in\mathcal{C}_{\subseteq V}}\in\overline{P}(V)$,
where $I\in\mathcal{C}$ is any open set such that $I\supseteq J$
(two different of these $I$ yield the same value of $P_{IJ}(T_{I})$
by the compatibility property of $(T_{I})_{I\in\mathcal{C}}\in\bar{P}(U)$).
It is not hard to prove that the family $(P_{IJ}(T_{I}))_{J\in\mathcal{C}_{\subseteq V}}$
is already a maximal one.
\end{enumerate}
\end{defn}

\noindent The link between $P$ and $\overline{P}$ is given by the
following natural transformation 
\begin{equation}
\eta_{I}:T\in P(I)\mapsto\text{max}\left[(P_{IJ}(T))_{J\in\mathcal{I}_{\subseteq I}}\right]\in\overline{P}(I).\label{eq:natural transf.}
\end{equation}
 With these definitions, we have the following universal property,
whose proof easily follows from our definitions and from Thm.~\ref{thm:sheafification1}:
\begin{thm}
\label{thm:sheafification }If $P:\mathcal{I}^{\text{op}}\longrightarrow\textbf{Mod}_{R}$
is separated then
\begin{enumerate}
\item \label{enu:sheafification}$\overline{P}:\mathbb{T}^{\text{op}}\longrightarrow\textbf{Mod}_{R}$
is a sheaf
\item \label{enu:natural transf.}\eqref{eq:natural transf.} defines a
natural transformation
\item $(\overline{P},\eta)$ is co-universal among all $(\overline{P},\eta)$
that satisfy \ref{enu:sheafification}, \ref{enu:natural transf.},
i.e.~if $(\tilde{P},\mu)$ also satisfies \ref{enu:sheafification},
\ref{enu:natural transf.}, then there exists one and only one natural
transformation $\psi$ such that $\psi_{I}\circ\eta_{I}=\mu_{I}$
for all $I\in\mathcal{I}$.
\end{enumerate}
\end{thm}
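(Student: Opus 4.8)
The plan is to treat Theorem \ref{thm:sheafification } as three separate claims and to verify each by reducing it to the machinery already set up in Definition \ref{def:locEQMaxFam} and Theorem \ref{thm:sheafification1}. For part \ref{enu:sheafification}, I would first check that $\overline{P}$ is a presheaf: that $\overline{P}_{UV}$ is $R$-linear (immediate from linearity of each $P_{IJ}$ together with the definition of the module operations via $\text{max}[-]$) and that $\overline{P}_{UU}=\text{id}$, $\overline{P}_{VW}\circ\overline{P}_{UV}=\overline{P}_{UW}$ (a routine reindexing check, using that restricting a maximal family to a sub-covering and then restricting again is the same as restricting once, again by compatibility of the underlying family). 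Then I would verify the locality axiom: if $(T_I)_{I\in\mathcal{B}}$ and $(S_I)_{I\in\mathcal{B}}$ in $\overline{P}(U)$ agree after restriction to every member $U_j$ of an open cover of $U$, then for each $I\in\mathcal{B}$ and each small basic $K\subseteq I$ one can find $j$ with $K\subseteq U_j$ (after shrinking, using that $\mathcal{I}$ is a base), so $P_{IK}(T_I)=P_{IK}(S_I)$; separateness of $P$ on $I$ then forces $T_I=S_I$, so $\mathcal{B}$ and the families coincide. Finally, for gluing I would take a compatible family $\big((T^{(j)}_{I})_{I\in\mathcal{B}_j}\big)_j$ of elements of $\overline{P}(U_j)$, let $\mathcal{B}:=\bigcup_j\mathcal{B}_j$ (a cover of $U$), observe the union family is $P$-compatible by the compatibility hypothesis, and set the glued section to be $\text{max}[(T_I)_{I\in\mathcal{B}}]$; Theorem \ref{thm:sheafification1}.\ref{enu:barT-ext}--\ref{enu:barT-max} guarantees this is a maximal family restricting correctly to each $U_j$.

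For part \ref{enu:natural transf.}, I would check that $\eta_I$ of \eqref{eq:natural transf.} is well-defined (the family $(P_{IJ}(T))_{J\in\mathcal{I}_{\subseteq I}}$ is plainly $P$-compatible, so $\text{max}[-]$ applies), is $R$-linear (again because $\text{max}[-]$ respects the module operations by construction), and is natural, i.e.\ $\overline{P}_{IK}\circ\eta_I=\eta_K\circ P_{IK}$ for basic $K\subseteq I$: both sides unwind to $\text{max}[(P_{IL}(T))_{L\in\mathcal{I}_{\subseteq K}}]$ using functoriality of $P$ and the description of $\overline{P}_{IK}$, and since $\mathbb{T}$-restrictions factor through basic restrictions (as $\mathcal{I}$ is a base) this extends to all open $V\subseteq I$.

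For the co-universal part, suppose $(\tilde P,\mu)$ also satisfies \ref{enu:sheafification} and \ref{enu:natural transf.}, with $\mu:P\Rightarrow\tilde P\circ\text{(inclusion)}$ a natural transformation into a sheaf $\tilde P$ on $\mathbb{T}$. Given $U\in\mathbb{T}$ and a maximal family $(T_I)_{I\in\mathcal{B}}\in\overline{P}(U)$, the elements $\mu_I(T_I)\in\tilde P(I)$ form a $\tilde P$-compatible family (by naturality of $\mu$ applied to each basic $K\subseteq I\cap I'$ and compatibility of $(T_I)_I$), so the sheaf axioms for $\tilde P$ produce a unique $\tilde P$-gluing in $\tilde P(U)$; I would define $\psi_U\big((T_I)_{I\in\mathcal{B}}\big):=\tilde P_U\big[(\mu_I(T_I))_{I\in\mathcal{B}}\big]$. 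One then checks $\psi$ is $R$-linear, natural (using \eqref{eq:glueFam}), that $\psi_I\circ\eta_I=\mu_I$ for $I\in\mathcal{I}$ (because $\eta_I(T)$ has underlying family $(P_{IJ}(T))_J$, whose image under $\mu$ is $(\mu_J(P_{IJ}T))_J=(\tilde P_{IJ}(\mu_I T))_J$ by naturality, whose $\tilde P$-gluing over $J\subseteq I$ is $\mu_I(T)$), and that $\psi$ is the only such natural transformation: any $\psi'$ with $\psi'_I\circ\eta_I=\mu_I$ must send $(T_I)_{I\in\mathcal{B}}$ — which equals the $\overline P$-gluing of $(\eta_I(T_I))_{I\in\mathcal{B}}$ by maximality — to the $\tilde P$-gluing of $(\mu_I(T_I))_I$ via \eqref{eq:morphGlue}, hence to $\psi_U$ of it.

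The step I expect to demand the most care is the gluing axiom in part \ref{enu:sheafification} and, closely related, the well-definedness and naturality bookkeeping around $\text{max}[-]$: one must be consistently careful that "covering by basic opens" can always be refined so that a given basic $K$ sits inside a single cover member, and that the maximal family attached to a union of coverings genuinely restricts back to each piece — this is exactly where separateness of $P$ and Theorem \ref{thm:sheafification1} do the real work, and it is easy to conflate the index set $\bar{\mathcal{B}}$ with the original $\mathcal{B}$ if one is not deliberate. Everything else is linear-algebraic or functorial routine, so I would state those verifications briefly and spend the written proof on the gluing and on the uniqueness of $\psi$.
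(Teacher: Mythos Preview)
Your proposal is correct and follows exactly the route the paper indicates: the paper gives no detailed proof, stating only that it ``easily follows from our definitions and from Thm.~\ref{thm:sheafification1}'', and your plan fills in precisely those details using precisely those ingredients. One small point of bookkeeping to watch in the locality argument: two elements of $\overline{P}(U)$ are maximal families $(T_I)_{I\in\mathcal{B}}$ and $(S_J)_{J\in\mathcal{C}}$ with a priori different index sets, so you should argue via maximality that $\mathcal{B}=\mathcal{C}$ rather than assuming it; your awareness of the $\bar{\mathcal B}$-versus-$\mathcal B$ issue in the last paragraph suggests you already see this.
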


\noindent The general construction of sheafification of a presheaf
can be found e.g.~in \cite{KaSc,McMo94}. All this formalizes the
intuitive idea that distributions on an arbitrary open set $U$ are
obtained by gluing together \emph{in the simplest way} distributions
on relatively compact $n$-dimensional intervals of $U$.

\subsection{Generalization: distributions on Hilbert spaces}

The previous construction leads naturally to consider several kind
of potential generalizations:
\begin{enumerate}
\item We can think about vector spaces over the complex field $\mathbb{C}$.
Note explicitly that even in $\mathbb{C}^{n}$, the usual construction
of distributions as continuous functionals on compactly supported
smooth functions cannot be generalized to holomorphic maps because
of the identity theorem.
\item The integrals \eqref{eq:I_k} represent a way to construct a primitive
in the direction $e_{k}$ and can hence be generalized to suitable
infinite dimensional spaces.
\item Definition \eqref{eq:I_k} leads us to consider an at most countable
orthonormal family $(e_{k})_{k\in\Lambda}$, $\Lambda\subseteq\N$,
in a Hilbert space, so that orthogonal complement $H=\text{span}(e_{k})\oplus\text{span}(e_{k}){}^{\perp}$
always exists.
\item Multidimensional intervals are used above as a base of the Euclidean
topology, but in more abstract normed spaces balls can be more conveniently
used.
\end{enumerate}
\noindent On the other hand, the definition of a non-trivial space
of generalized functions of a complex variable that allows one to
consider derivatives of continuous functions is a non-obvious task.
In fact, if we want that these generalized functions embed ordinary
continuous maps and, at the same time, satisfy the Cauchy theorem,
then the continuous functions would also be path-independent and,
from Morera's theorem, they would actually be holomorphic functions,
see e.g.~\cite{Zal}. Likewise, if we want that these generalized
functions satisfy the Cauchy-Riemann equations (even with respect
to distributional derivatives), then necessarily the embedded continuous
ones will be ordinary holomorphic functions, see \cite{GrMo}. Using
the language of the present paper, we could say that the co-universal
solution of the problem to have derivatives of continuous functions
of a complex variable which are path-independent or satisfy the Cauchy-Riemann
equation is the sheaf of holomorphic functions, and a larger space
is not possible.

\noindent In the following, we therefore consider a Hilbert space
$H$ with inner product $(x,y)$. The field of scalars is denoted
by $\mathbb{F}\in\{\R,\mathbb{C}\}$. In this space, we fix an orthonormal
Schauder basis $(e_{k})_{k\in\Lambda}$, $\Lambda\subseteq\N$ of
$H$. An interesting example is the space $\mathcal{C}^{0}(K,\R^{d})$
of all the $\R^{d}$ valued continuous functions on a compact set.

For simplicity, we deal with the case $\mathbb{F}=\mathbb{R}$, and
the case $\mathbb{F}=\mathbb{C}$ can be treated in a very similar
way. Let $x$, $c\in H$, $k\in\Lambda$, $J\subseteq\Lambda$ a finite
subset. Under our assumptions, $x=x_{k}+x_{k}^{\perp}$, where $x_{k}=(x,e_{k})e_{k}=:\hat{x}_{k}e_{k}\in\text{span}(e_{k})$
and $x_{k}^{\perp}\in\text{span}(e_{k})^{\perp}$; more generally,
$x=x_{J}+x_{J}^{\perp}$, where $x_{J}:=\sum_{j\in J}x_{j}\in\text{span}(\{e_{j}\}_{j\in J})$,
and $x_{J}^{\perp}\in\text{span}(\{e_{j}\}_{j\in J})^{\perp}$. We
set $\widehat{[c,x]}_{j}:=[\min(\hat{c}_{j},\hat{x}_{j}),\max(\hat{c}_{j},\hat{x}_{j})]\subseteq\R$,
and $[c,x]_{J}=\{\sum_{j\in J}t_{j}e_{j}\mid t_{j}\in\widehat{[c,x]}_{j}\,\,\forall j\in J\}.$

Let $f\in\mathcal{C}^{0}(B_{r}(c),H)$ a continuous function defined
in the ball $B_{r}(c)\subseteq H$ of radius $r>0$ and center $c\in H$.
We have that 
\[
\forall x\in B_{r}(c):\ f(x)=\sum_{k\in\Lambda}\hat{f}_{k}(x)e_{k}.
\]
Using the orthogonality property and the continuity of $f$, one can
see that each $\hat{f}_{k}$ is also continuous. Hence, for any $x\in B_{r}(c)$,
for any $j$, $k\in\Lambda$, the function $\widehat{[c,x]}_{j}\ra\mathbb{R}$,
$t\mapsto\hat{f}_{k}(x_{j}^{\perp}+te_{j})$ is continuous. Therefore,
the integral 
\[
\int_{\hat{c}_{j}}^{\hat{x}_{j}}\hat{f}_{k}(x_{j}^{\perp}+te_{j})\,\diff t
\]
is well defined. We assume that the following assumption holds 
\begin{equation}
\forall x\in B_{r}(c)\,\forall J\subseteq\Lambda\,\text{finite}:\ \sum_{k\in\Lambda}\sup_{y\in[c,x]_{J}}|\hat{f}_{k}(x_{J}^{\perp}+y)|^{2}<\infty.\label{eq: int condition}
\end{equation}
The sheaf of continuous functions $f\in\mathcal{C}^{0}(B_{r}(c),H)$
satisfying \eqref{eq: int condition} is denoted by $\mathcal{C}_{\text{p}}^{0}(B_{r}(c),H)$.
Then, we clearly have 
\[
\left\Vert \sum_{k\in\Lambda}\int_{\hat{c}_{j}}^{\hat{x}_{j}}\hat{f}_{k}(x_{j}^{\perp}+te_{j})\,\diff t\cdot e_{k}\right\Vert ^{2}\leq|\hat{x}_{j}-\hat{c}_{j}|^{2}\sum_{k\in\Lambda}\sup_{y\in[c,x]_{j}}|\hat{f}_{k}(x_{j}^{\perp}+y)|^{2}<\infty.
\]
Therefore, we can set 
\begin{equation}
\mathfrak{I}_{j}(f)(x):=\int_{\hat{c}_{j}}^{\hat{x}_{j}}f(x)\,\diff e_{j}:=\sum_{k\in\Lambda}\int_{\hat{c}_{j}}^{\hat{x}_{j}}\hat{f}_{k}(x_{j}^{\perp}+te_{j})\,\diff t\cdot e_{k}\label{eq:Primitive}
\end{equation}
 and is called the primitive of $f$ in the direction $e_{j}$. Indeed,
\eqref{eq:Primitive} is a generalization of \eqref{eq:I_k}. Moreover,
we have 
\[
\forall x\in B_{r}(c)\,\forall J\subseteq\Lambda\,\text{finite}\,\forall y\in[c,x]_{J}:\ \widehat{\mathfrak{I}_{j}(f)}_{k}(x_{J}^{\perp}+y):=\int_{\hat{c}_{j}}^{\hat{x}_{j}}\hat{f}_{k}((x_{J}^{\perp}+y)_{j}^{\perp}+te_{j})\,\diff t.
\]
We can easily see that 
\[
(x_{J}^{\perp}+y)_{j}^{\perp}:=\left\{ \begin{array}{cc}
x_{J}^{\perp}+y_{j}^{\perp} & j\in J\\
x_{J\cup\{j\}}^{\perp}+y & j\not\in J
\end{array}\right.
\]
In former situation we have 
\begin{multline*}
\sup_{y\in[c,x]_{J}}\left|\widehat{\mathfrak{I}_{j}(f)}_{k}(x_{J}^{\perp}+y)\right|\leq|\hat{x}_{j}-\hat{c}_{j}|\sup_{z\in[c,x]_{J\backslash\{j\}}}\sup_{t\in\widehat{[c,x]}_{j}}\left|\hat{f}_{k}(x_{J}^{\perp}+z+te_{j})\right|\\
=|\hat{x}_{j}-\hat{c}_{j}|\sup_{y\in[c,x]_{J}}\left|\hat{f}_{k}(x_{J}^{\perp}+y)\right|,
\end{multline*}
and in the latter situation we have 
\begin{multline*}
\sup_{y\in[c,x]_{J}}\left|\widehat{\mathfrak{I}_{j}(f)}_{k}(x_{J}^{\perp}+y)\right|\leq|\hat{x}_{j}-\hat{c}_{j}|\sup_{y\in[c,x]_{J}}\sup_{t\in\widehat{[c,x]}_{j}}\left|\hat{f}_{k}(x_{J\cup\{j\}}^{\perp}+y+te_{j})\right|\\
|\hat{x}_{j}-\hat{c}_{j}|\sup_{z\in[c,x]_{J\cup\{j\}}}\left|\hat{f}_{k}(x_{J\cup\{j\}}^{\perp}+z)\right|.
\end{multline*}
Thus, $\mathfrak{I}_{j}(f)$ also satisfies assumption \eqref{eq: int condition}.
Therefore, for any continuous function $f\in\mathcal{C}^{0}(B_{r}(c),H)$
satisfying \eqref{eq: int condition}, and for any finite family $(j_{1},...,j_{m})\in\Lambda^{m}$,
we can consider the function $\mathfrak{I}_{j_{m}}\circ\ldots\circ\mathfrak{I}_{j_{1}}(f)$.

One can ask now whether the equality 
\begin{equation}
\forall j,\,l\in\Lambda:\,\mathfrak{I}_{l}\circ\mathfrak{I}_{j}(f)=\mathfrak{I}_{j}\circ\mathfrak{I}_{l}(f)\label{eq:Fubini}
\end{equation}
holds or not. Indeed, using Fubini's theorem (to the continuous function
$\widehat{[c,x]}_{j}\times\widehat{[c,x]}_{l}\subseteq\R^{2}\ra\R,$
$(t,s)\mapsto g_{x,k}(s,t):=\hat{f}_{k}(x_{\{l\}\cup\{j\}}^{\perp}+te_{j}+se_{l})$)
we obtain 
\begin{align*}
\int_{\hat{c}_{l}}^{\hat{x}_{l}}\widehat{\mathfrak{I}_{j}(f)}_{k}(x_{l}^{\perp}+se_{l})\diff s & =\int_{\hat{c}_{l}}^{\hat{x}_{l}}\int_{\hat{c}_{j}}^{\hat{x}_{j}}\hat{f}_{k}(x_{\{l\}\cup\{j\}}^{\perp}+te_{j}+se_{l})\diff t\diff s.\\
 & =\int_{\hat{c}_{j}}^{\hat{x}_{j}}\int_{\hat{c}_{l}}^{\hat{x}_{l}}\hat{f}_{k}(x_{\{l\}\cup\{j\}}^{\perp}+te_{j}+se_{l})\diff s\diff t\\
 & =\int_{\hat{c}_{j}}^{\hat{x}_{j}}\widehat{\mathfrak{I}_{l}(f)}_{k}(x_{j}^{\perp}+te_{j})\diff t
\end{align*}
which shows that \eqref{eq:Fubini} holds.

Furthermore, for any $j$, $k\in\Lambda$, $x\in H$, we have that
the function $(-\eps,\eps)\ra\R$, ($\eps>0$ is sufficiently small)
$t\mapsto\widehat{\mathfrak{I}_{j}(f)}_{k}(x+te_{j})$ is derivable
and 
\[
\forall\,k\in\Lambda\,\forall x\in B_{r}(c):\ \lim_{t\to0}\frac{\widehat{\mathfrak{I}_{j}(f)}_{k}(x+te_{j})-\widehat{\mathfrak{I}_{j}(f)}_{k}(x)}{t}=f_{k}(x),
\]
which implies that 
\[
\frac{\partial\mathfrak{I}_{j}f}{\partial e_{j}}(x):=\lim_{t\to0}\frac{\mathfrak{I}_{j}f(x+te_{j})-\mathfrak{I}_{j}f(x)}{t}=f(x)\quad\forall j\in\Lambda\,\forall x\in B_{r}(c)
\]
where the limit is with respect to the weak topology.
\begin{rem}
\noindent \label{rem:pathInt}
\begin{enumerate}
\item A possible generalization is to consider continuous functions with
values in another Hilbert space $B$ having an orthogonal Schauder
basis.
\item The case where $\mathbb{F}=\mathbb{C}$ can be treated in a very similar
way, and one can consider the primitive and the derivative with respect
to the real part and the imaginary part of $e_{k}$. Of course, in
this way we do not get complex differentiability but a trivially isomorphic
construction of $\mathcal{D}'(\R^{2})$.
\item In the finite dimensional case $\mathcal{D}'(\R^{n})$, both continuity
and differentiability of a function $f:U\ra\R^{n}$ can be equivalently
formulated considering only the projections $f_{k}:U\ra\R$, as we
did e.g.~in Thm.~\ref{thm:UP of D'}.
\end{enumerate}
\end{rem}

\noindent We can now proceed as in the classical case:
\begin{defn}
\label{def:I^r-Polyn}~
\begin{enumerate}
\item $\mathfrak{I}^{0}f:=f$ and $\mathfrak{I}^{r}:=\mathfrak{I}^{r_{1}}\circ\ldots\circ\mathfrak{I}^{r_{n}}$
for all $r\in\Lambda^{n}$, $n\in\N$;
\item If $B$ is a ball in $H$ and $m\in\Lambda^{n}$, then we define $\theta\in\mathcal{P}_{m}(B)$
if $\theta\in\mathcal{C}^{0}(B,H)$ with $\hat{\theta}_{k}:=\sum_{j\in\Lambda,\,m_{j}\neq0}\hat{\theta}_{kj}$
where $\hat{\theta}_{kj}$ is a polynomial function in $\hat{x}_{j}$
of order $<m_{j}$ \textcolor{black}{whose coefficients are continuous
functions on $B$ independent by $\hat{x}_{j}$.}
\end{enumerate}
\end{defn}

\noindent We can now proceed by following Sebastiao e Silva's idea:
in each ball $B$ in $H$ we define $(r,f)\sim(s,g)$ if there exists
$n\in\N$ such that $r$, $s\in\Lambda^{n}$, $f$, $g\in\mathcal{C}_{\text{p}}^{0}(B)$
and $\mathfrak{I}^{m-r}f-\mathfrak{I}^{m-s}g\in\mathcal{P}_{m}(B)$,
where $m:=\max(r,s)$; $\mathcal{D}_{\text{f}}'(B):=(\bigcup_{n\in\N}\Lambda^{n}\times\mathcal{C}_{\text{p}}^{0}(B))/\sim$;
$\lambda_{B}(f):=[(0,f)]_{\sim}$; $D_{k}([(r,f)]_{\sim}):=\left[(r+e_{k},f)\right]_{\sim}$,
so that $D^{r}f=[(r,f)]_{\sim}\in\mathcal{D}_{\text{f}}'(B)$; finally,
the vector space operations are defined as $D^{r}f+D^{s}g:=D^{m}(\mathfrak{I}^{m-r}f+\mathfrak{I}^{m-s}g)$
($m:=\max(r,s)$) and $\mu\cdot D^{r}f:=D^{r}(\mu f)$ for all $\mu\in\R$;
the restriction to another ball $B'\subseteq B$ is defined by $\left(D^{r}f\right)|_{B'}:=D^{r}(f|_{B'})$.
With these definitions we obtain a separated functor 
\begin{equation}
\mathcal{D}'_{\text{f}}:\mathcal{B}(H)^{\text{op}}\ra\mathbf{Vect}_{\mathbb{F}},\label{eq:functPoset-1}
\end{equation}
where $\mathcal{B}(U)$ is the poset of all the balls contained in
the open set $U\subseteq H$. Using sheafification of this functor,
as explained above in general terms, we obtain a co-universal solution
of this problem.

\section{\label{sec:Colombeau}Co-Universal properties of Colombeau algebras}

A quotient space $A/\!\!\sim$ is the simplest way to get a new space
(in the same category) and a morphism $p:A\ra A/\!\!\sim$ such that
the new notion of equality $a_{1}\sim a_{2}$, for elements $a_{k}\in A$,
implies the standard one: $p(a_{1})=p(a_{2})$. The corresponding
well-known universal property formalizes exactly this idea. Therefore,
whenever we have a quotient space, we can use this general property
to get a first simple characterization of $A/\!\!\sim$ \emph{starting
from the data $A$ and $\sim$}. The defect of this general approach
is clearly that we are not justifying neither the choice of $A$ nor
the equivalence relation $\sim$ as the simplest solution of a clarified
problem. In this section, we first introduce Colombeau special algebra
using the universal property of a quotient, but later, using another
universal property, we make clear why we are using that space and
that equivalence relation.

\subsection{Co-universal property as quotient of moderate nets}

In this section, we want to formulate the co-universal property of
Colombeau algebras by formulating the classical co-universal property
of a quotient at a ``higher level'', i.e.~talking of functors of
$\R$-algebras and natural transformations instead of \textcolor{black}{algebras
and their morphisms.} In the following, we set $I:=(0,1]$, functions
$f\in X^{I}$ are simply called \emph{nets} and denoted as $f=(f_{\eps})$,
any net $\rho=(\rho_{\varepsilon})\in\R_{>0}^{I}$ such that $\rho_{\varepsilon}\to0$
as $\varepsilon\to0^{+}$ will be called a \emph{gauge}, and the set
$AG(\rho^{-1}):=\{(\rho_{\varepsilon}^{-a})\in\R^{I}\mid a\in\R_{>0}\}$
will be called the \emph{asymptotic gauge} generated by $\rho.$ If
$\mathcal{P}\{\eps\}$ is any property of $\eps\in I$, we write $\forall^{0}\eps:\ \mathcal{P}\{\eps\}$
if the property holds for all $\eps$ sufficiently small, i.e.~$\exists\eps_{0}\in I\,\forall\eps\in(0,\eps_{0}]:\ \mathcal{P}\{\eps\}$.
\begin{defn}
\label{def:CA}Let $\Omega\subseteq\R^{d}$ be an open set. The \emph{Colombeau
algebras} is defined by the quotient $\rhoGs(\Omega):=\rhoMod(\Omega)\slash\rhoNeg(\Omega)$,
where 
\[
\rhoMod(\Omega):=\left\{ (u_{\varepsilon})\in\mathcal{C}^{\infty}(\Omega)^{I}\mid\forall K\Subset\Omega\,\forall\alpha\,\exists N\in\N:\ \sup_{x\in K}\left|\partial^{\alpha}u_{\varepsilon}(x)\right|=O\left(\rho_{\varepsilon}^{-N}\right)\right\} 
\]
\[
\rhoNeg(\Omega):=\left\{ (u_{\varepsilon})\in\mathcal{C}^{\infty}(\Omega)^{I}\mid\forall K\Subset\Omega\,\forall\alpha\,\forall n\in\N:\ \sup_{x\in K}\left|\partial^{\alpha}u_{\varepsilon}(x)\right|=O\left(\rho_{\varepsilon}^{n}\right)\right\} 
\]
are resp.~called \emph{moderate} and \emph{negligible} nets ($O(-)$
is the Landau symbol for $\eps\to0^{+}$). The equivalence class defined
by the net $(u_{\varepsilon})\in\rhoMod(\Omega)$ is denoted by $[u_{\varepsilon}]_{\rho}$
or simply by $[u_{\varepsilon}]$ when we are considering only one
gauge.
\end{defn}

\noindent It is easy to prove that $\rhoGs(\Omega)$ is a quotient
$\R$-algebra with pointwise operations $[u_{\eps}]+[v_{\eps}]=[u_{\eps}+v_{\eps}]$
and $[u_{\eps}]\cdot[v_{\eps}]=[u_{\eps}\cdot v_{\eps}]$. Let $\OR$
be the category having as objects open sets $U\subseteq\R^{u}$ of
any dimension $u\in\N=\left\{ 0,1,2,\ldots\right\} $, and smooth
functions as arrows. If we extend $\rhoGs(-)$ on the arrows of $\left(\OR\right)^{\text{op}}$
by $\rhoGs(f)([u_{\eps}]):=[u_{\eps}\circ f]$, we get a functor \textcolor{black}{$\rhoGs(-):\left(\OR\right)^{\text{op}}\ra\mathbf{ALG}_{\mathbb{R}}$,
where $\mathbf{ALG}_{\mathbb{R}}$}\textcolor{olive}{{} }\textcolor{black}{denotes
the category of $\mathbb{{R}}$-algebras.}
\begin{defn}
\textcolor{black}{We denote by $\mathbf{Col}$ the }\textcolor{black}{\emph{category
of Colombeau algebras}}\textcolor{black}{{} and we write $(G,\pi)\in\mathbf{Col}$
if}
\begin{enumerate}
\item \textcolor{black}{$G:\left(\OR\right)^{\text{op}}\ra\mathbf{ALG}_{\mathbb{R}}$
is a functor;}
\item \textcolor{black}{$\pi:\rhoMod(-)\ra G$ is a natural transformation
such that $\rhoNeg(\Omega)\subseteq\text{Ker}(\pi_{\Omega})$ for
all $\Omega\in\OR$. We simply write $\pi_{\Omega}(u_{\eps}):=\pi_{\Omega}((u_{\eps}))$
for all $(u_{\eps})\in\rhoMod(\Omega)$.}
\end{enumerate}
\textcolor{black}{Moreover, we write $(G,\pi)\stackrel{\tau}{\longrightarrow}(F,\alpha)$
in $\mathbf{Col}$ if and only if the following diagram (of natural
transformations) commutes 
\[
\xymatrix{{}\rhoMod(-)\ar[r]^{\alpha}\ar[d]^{\pi} & F\\
G\ar[ru]_{\tau}
}
\]
}

\end{defn}

\begin{thm}
\textcolor{black}{For every $(G,\pi)\in\mathbf{Col}$, there exist
a unique $\tau:(\rhoGs(-),[-])\longrightarrow(G,\pi)$ in $\mathbf{Col}$,
i.e.~$(\rhoGs(-),[-])$ is co-universal in $\mathbf{Col}$, i.e.~is
the simplest way to associate an algebra to any open set $\Omega\subseteq\R^{d}$
and saying that two moderate nets $(u_{\eps})$, $(v_{\eps})\in\rhoMod(\Omega)$
are equal if they differ by a negligible net: $(u_{\eps}-v_{\eps})\in\rhoNeg(\Omega)$.}
\end{thm}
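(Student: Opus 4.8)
The plan is to mimic the classical universal property of a quotient, but carried out uniformly (naturally) in the open set $\Omega$. First I would define, for a given $(G,\pi)\in\mathbf{Col}$ and each $\Omega\in\OR$, a candidate map $\tau_\Omega\colon\rhoGs(\Omega)\to G(\Omega)$ by setting $\tau_\Omega([u_\eps]_\rho):=\pi_\Omega(u_\eps)$ for every $(u_\eps)\in\rhoMod(\Omega)$. The only thing that has to be checked for well-definedness is that this does not depend on the chosen representative: if $[u_\eps]_\rho=[v_\eps]_\rho$, then $(u_\eps-v_\eps)\in\rhoNeg(\Omega)$, hence by hypothesis $\rhoNeg(\Omega)\subseteq\mathrm{Ker}(\pi_\Omega)$ and therefore $\pi_\Omega(u_\eps)-\pi_\Omega(v_\eps)=\pi_\Omega(u_\eps-v_\eps)=0$, using that $\pi_\Omega$ is an $\R$-algebra homomorphism. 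Then $\tau_\Omega$ is an $\R$-algebra homomorphism because $\pi_\Omega$ is and the quotient operations on $\rhoGs(\Omega)$ are the pointwise ones, which are exactly the images of the operations on $\rhoMod(\Omega)$ under $[-]$.

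Next I would verify that $\tau=(\tau_\Omega)_\Omega$ is a natural transformation from $\rhoGs(-)$ to $G$, i.e.\ that for every smooth $f\colon\Omega'\to\Omega$ (an arrow of $(\OR)^{\mathrm{op}}$ from $\Omega$ to $\Omega'$) the square relating $\tau_\Omega,\tau_{\Omega'},\rhoGs(f),G(f)$ commutes. This is a direct computation on a representative: for $(u_\eps)\in\rhoMod(\Omega)$,
\[
\tau_{\Omega'}\bigl(\rhoGs(f)([u_\eps])\bigr)=\tau_{\Omega'}([u_\eps\circ f])=\pi_{\Omega'}(u_\eps\circ f)=G(f)\bigl(\pi_\Omega(u_\eps)\bigr)=G(f)\bigl(\tau_\Omega([u_\eps])\bigr),
\]
where the middle equality is precisely the naturality of $\pi$ with respect to the arrow $f$ (recall $\rhoMod(f)((u_\eps))=(u_\eps\circ f)$). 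Then $\tau\circ[-]=\pi$ holds by construction, so $(\rhoGs(-),[-])\xrightarrow{\tau}(G,\pi)$ is an arrow in $\mathbf{Col}$, establishing existence.

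For uniqueness, suppose $\tau'$ is another arrow in $\mathbf{Col}$ with $\tau'\circ[-]=\pi$. For each $\Omega$ and each element $x\in\rhoGs(\Omega)$, pick a representative so that $x=[u_\eps]_\rho=[-]_\Omega((u_\eps))$; then $\tau'_\Omega(x)=\tau'_\Omega([-]_\Omega((u_\eps)))=\pi_\Omega(u_\eps)=\tau_\Omega(x)$. Since $[-]_\Omega$ is surjective onto $\rhoGs(\Omega)$, this forces $\tau'_\Omega=\tau_\Omega$ for every $\Omega$, hence $\tau'=\tau$. I do not expect any serious obstacle here: the whole argument is the quotient universal property applied componentwise, with the two minor points to be careful about being (a) that $\rhoNeg(\Omega)\subseteq\mathrm{Ker}(\pi_\Omega)$ is exactly what makes $\tau_\Omega$ well defined, and (b) that one must check naturality of $\tau$, which reduces cleanly to the naturality of $\pi$ together with the definition $\rhoGs(f)([u_\eps])=[u_\eps\circ f]$ of the functor on arrows. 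The only place requiring a line of justification beyond pure formalism is the observation that the algebra structure on $\rhoGs(\Omega)$ is genuinely the quotient structure, so that $\tau_\Omega$ inherits multiplicativity and additivity from $\pi_\Omega$; this is immediate from Definition~\ref{def:CA} and the remark that pointwise operations descend to the quotient.
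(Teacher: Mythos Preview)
Your proposal is correct and follows essentially the same approach as the paper: define $\tau_\Omega([u_\eps]):=\pi_\Omega(u_\eps)$ and check well-definedness via $\rhoNeg(\Omega)\subseteq\mathrm{Ker}(\pi_\Omega)$ together with additivity of $\pi_\Omega$. Your write-up is in fact more thorough than the paper's, which omits the explicit verification of naturality of $\tau$ and of the algebra-homomorphism property; these are routine but it is good practice to spell them out as you did.
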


\begin{proof}
\textcolor{black}{We should find $\tau$ such that the following diagram
commutes 
\[
\xymatrix{{}\rhoMod(-)\ar[r]^{\pi}\ar[d]^{[-]} & G\\
{}\rhoGs(-)\ar[ru]_{\tau}
}
\]
The only way $\tau$ can be defined is by setting $\tau_{\Omega}([u_{\varepsilon}]):=\pi_{\Omega}(u_{\varepsilon})$
for all $\Omega\in\OR$. In order to prove that $\tau_{\Omega}$ is
well defined, take two moderate nets $(u_{\varepsilon})$ and $(v_{\varepsilon})$
such that $[u_{\varepsilon}]=[v_{\varepsilon}]$, then we have $\tau_{\Omega}([u_{\varepsilon}])=\pi_{\Omega}(u_{\varepsilon})=\pi_{\Omega}(v_{\varepsilon}+(u_{\varepsilon}-v_{\varepsilon}))=\pi_{\Omega}(v_{\varepsilon})+\pi_{\Omega}(u_{\varepsilon}-v_{\varepsilon})$
because for every $\Omega$, $\pi_{\Omega}$ is an algebra-homomorphism.
Since $\rhoNeg(\Omega)\subseteq\text{Ker}(\pi_{\Omega})$, it follows
that $\tau_{\Omega}([u_{\varepsilon}])=\pi_{\Omega}(u_{\varepsilon})=\pi_{\Omega}(v_{\varepsilon})=\tau_{\Omega}([v_{\varepsilon}])$.}
\end{proof}
Even this simple co-universal property highlights the following possible
generalizations: instead of the category $\OR$ we could take any
category with a notion of smooth function with respect to a ring of
scalars (e.g.~in the field of hyperreals of nonstandard analysis,
see e.g.~\cite{CKKR,Tod09} and references therein; in the ring of
Fermat reals, see \cite{Gio10,Gio10b,Gio11,GioWu16}; in the Levi-Civita
field, see e.g.~\cite{ShBe}, etc. Note that the use of supremum
in Def.~\ref{def:CA} can be easily avoided using an upper bound
inequality, and this can be useful if the ring of scalars is not Dedekind
complete). Instead of the sheaf of smooth functions $\mathcal{C}^{\infty}(-)$,
we can consider any sheaf of smooth functions in more general spaces,
such as diffeological or Frölicher or convenient spaces, see e.g.~\cite{Gio11b}
and references therein. Instead of the asymptotic gauge $AG(\rho^{-1})$,
we can take more general structures, as proved in \cite{GiLu15,GiNi14}.

\subsection{\label{subsec:ColAsQuotientAlg}Co-universal properties as the simplest
quotient algebras}

We now want to show another co-universal property of Colombeau algebras
by completing the idea that \emph{a Colombeau algebra is a quotient
of a subalgebra of $\Coo(-)^{I}$, and moderate and negligible nets
are the simplest choices in order to have non trivial representatives
of zero}. We first define in general what is a quotient subalgebra
of $\Coo(-)^{I}$ as an object of the category $\textsc{QAlg}(\mathcal{C}^{\infty I})$:
\begin{defn}
\label{def:quotientAlgebras} We say that $(G,\pi)$ is a \emph{quotient
subalgebra of }$\Coo(-)^{I}$, and we write $(G,\pi)\in\textsc{QAlg}(\mathcal{C}^{\infty I})$,
if:
\begin{enumerate}
\item $G:\left(\OR\right)^{\text{op}}\ra\mathbf{ALG}_{\R}$ is a functor;
\item \label{enu:piEpi}$\pi:M\ra G$ is a natural transformation such that
$M(\Omega)$ is a subalgebra of $\Coo(\Omega)^{I}$ and $\pi_{\Omega}:M(\Omega)\ra G(\Omega)$
is an epimorphism of $\R$-algebras for all $\Omega\in\OR$.
\end{enumerate}
\end{defn}

\noindent \textcolor{black}{Let us justify why this is related to
quotient algebras. Since for every $\Omega\in\OR$, $\pi_{\Omega}$
is an algebra homomorphism, for any $(u_{\varepsilon})$, $(v_{\varepsilon})\in M(\Omega)\subseteq\mathcal{C}^{\infty}(\Omega)^{I}$
and for any $r\in\mathbb{R}$ we have}
\begin{enumerate}[label=\arabic*)]
\item \textcolor{black}{\label{enu:1}$\pi_{\Omega}(u_{\varepsilon})+\pi_{\Omega}(v_{\varepsilon})=\pi_{\Omega}(u_{\varepsilon}+v_{\varepsilon})$}
\item \textcolor{black}{\label{enu:2}$\pi_{\Omega}(u_{\varepsilon})\cdot\pi_{\Omega}(v_{\varepsilon})=\pi_{\Omega}(u_{\varepsilon}\cdot v_{\varepsilon})$}
\item \textcolor{black}{\label{enu:3}$r\cdot\pi_{\Omega}(u_{\varepsilon})=\pi_{\Omega}(r\cdot u_{\varepsilon})$.}
\end{enumerate}
\textcolor{black}{Moreover, the epimorphism condition Def.~\ref{def:quotientAlgebras}.\ref{enu:piEpi}
means that for every $g\in G(\Omega),$ there exists $(u_{\varepsilon})\in M(\Omega)$
such that $\pi_{\Omega}(u_{\varepsilon})=g.$ This implies 
\begin{equation}
G(\Omega)\simeq M(\Omega)/\text{Ker}(\pi_{\Omega})\text{ in }\mathbf{ALG}_{\R}.\label{eq:isomorphism}
\end{equation}
}

\textcolor{black}{Why are moderate nets $\rhoMod(\Omega)$ the simplest
subalgebra in order to have nontrivial representatives of zero, and
what does this ``nontrivial'' mean? Let $(z_{\varepsilon})\in M(\Omega)$
be such a representative, i.e.~$\pi_{\Omega}(z_{\varepsilon})=0\in G(\Omega)$,
and assume we can take a constant net $\left(J_{\varepsilon}\right)\in M(\Omega)\cap\R^{I}$
such that $\lim_{\varepsilon\longrightarrow0^{+}}|J_{\varepsilon}|=+\infty$.
Then, we have 
\begin{equation}
\pi_{\Omega}(z_{\varepsilon})\cdot\pi_{\Omega}(J_{\varepsilon})=0\cdot\pi_{\Omega}(J_{\varepsilon})=\pi_{\Omega}(z_{\varepsilon}\cdot J_{\varepsilon}),\label{eq:pot infinitesimal}
\end{equation}
and hence also $(z_{\eps}\cdot J_{\eps})$ is another representative
of zero, and this holds for all possible infinite constant nets $(J_{\eps})$.
On the other hand, we would like to have that ``representatives of
zero'' are, in some sense, ``small''. This intuitive idea of being
small is formalized in the following condition:}
\begin{defn}
\label{def:reprZeroInf}We say that \textcolor{black}{\emph{every
representative of zero in}}\textcolor{black}{{} $(G,\pi)$ }\textcolor{black}{\emph{is
infinitesimal}}\textcolor{black}{{} if for all representatives of zero,
$(z_{\varepsilon})\in M(\Omega)\subseteq\mathcal{C}^{\infty}(\Omega)^{I}$
such that $\pi_{\Omega}(z_{\varepsilon})=0\in G(\Omega)$, each compact
set $K\Subset\Omega$ and each multi-index $\alpha\in\N^{d}$, we
have
\begin{equation}
\sup_{x\in K}|\partial^{\alpha}z_{\varepsilon}(x)|:=p_{K,\alpha}(z_{\varepsilon})\to0\text{ as }\varepsilon\to0^{+}.\label{eq:nonTriv}
\end{equation}
}
\end{defn}

\noindent \textcolor{black}{For example, this property does not hold
in nonstandard analysis, see \cite{CKKR}. If this condition holds,
equation \eqref{eq:pot infinitesimal} implies that for each $K\Subset\Omega$
and each multi-index $\alpha\in\N^{d}$, we have $p_{K,\alpha}(z_{\varepsilon}\cdot J_{\varepsilon})=p_{K,\alpha}(z_{\varepsilon})\cdot|J_{\varepsilon}|\longrightarrow0$,
which implies $p_{K,\alpha}(z_{\varepsilon})\leq|J_{\varepsilon}|^{-1}$
for $\eps$ sufficiently small.}

\textcolor{black}{For $\mathcal{R}\subseteq\R^{I}$, let 
\begin{equation}
\infty(\mathcal{R}):=\left\{ (J_{\varepsilon})\in\mathcal{R}\mid\lim_{\varepsilon\longrightarrow0^{+}}|J_{\varepsilon}|=+\infty\right\} \label{eq:infinities}
\end{equation}
be the set of all the infinite nets in $\mathcal{R}$. We then have
two possibilities, which link property \eqref{eq:nonTriv} with the
intuitive idea of trivial representatives of zero:}
\begin{enumerate}
\item \textcolor{black}{$\infty(M(\Omega)\cap\R^{I})$ contains all the
infinite nets. This implies that for all $K$ and for all $\alpha$,
$p_{K,\alpha}(z_{\eps})=0$ for all $\eps$ small (proceed by contradiction
by taking $J_{\eps}:=r\cdot\left|p_{K,\alpha}(z_{\varepsilon})\right|^{-1}$
for all $\eps$ such that $p_{K,\alpha}(z_{\varepsilon})\ne0$ and
where $r\in\R_{>0}$). In this case, the quotient must be trivial
and this situation corresponds to the Schmieden-Laugwitz-Egorov model,
see \cite{ScLa,Ego}.}
\item \textcolor{black}{$\text{\ensuremath{\infty}}(M(\Omega)\cap\R^{I})$
does not contain all the infinite nets.}
\end{enumerate}
\noindent We now define morphisms of $\textsc{QAlg}(\mathcal{C}^{\infty I})$:
\begin{defn}
\label{def:arrowsQuotientAlg} Let $(G,\pi)$, $\left(H,\eta\right)\in\textsc{QAlg}(\Coo(-)^{I})$.
\emph{A morphism of quotient algebras} $i:(G,\pi)\ra(H,\eta)$ is
given by an inclusion 
\begin{equation}
i:\infty(\pi_{\Omega})\hookrightarrow\infty(\eta_{\Omega}),\quad\forall\Omega\in\OR
\end{equation}
where $\infty(\pi_{\Omega}):=\infty(M(\Omega)\cap\R^{I})$ for all
$\Omega\in\OR$.
\end{defn}

\noindent We have the following
\begin{lem}
Quotient algebras of $\Coo(-)^{I}$ and their morphisms form a category
$\textsc{QAlg}(\mathcal{C}^{\infty I})$.
\end{lem}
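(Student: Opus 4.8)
The plan is to check directly the axioms of a category for $\textsc{QAlg}(\Coo(-)^I)$: an identity morphism for every object, an associative composition, the two unit laws, and the implicit requirement that the composite of two morphisms is again a morphism in the sense of Def.~\ref{def:arrowsQuotientAlg}.

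First I would unwind the definition of arrow. By Def.~\ref{def:arrowsQuotientAlg}, a morphism $i:(G,\pi)\ra(H,\eta)$ is the family of set inclusions $i_\Omega:\infty(\pi_\Omega)\hookrightarrow\infty(\eta_\Omega)$, $\Omega\in\OR$, where for an object $(G,\pi)$ with source presheaf $M$ one has $\infty(\pi_\Omega)=\infty(M(\Omega)\cap\R^I)$. Such a family exists precisely when $\infty(\pi_\Omega)\subseteq\infty(\eta_\Omega)$ for every $\Omega\in\OR$, and then it is uniquely determined; in particular, between any two objects of $\textsc{QAlg}(\Coo(-)^I)$ there is at most one morphism.

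For composition, given $i:(G,\pi)\ra(H,\eta)$ and $j:(H,\eta)\ra(K,\zeta)$, set $(j\circ i)_\Omega:=j_\Omega\circ i_\Omega$ for all $\Omega\in\OR$. From $\infty(\pi_\Omega)\subseteq\infty(\eta_\Omega)$ and $\infty(\eta_\Omega)\subseteq\infty(\zeta_\Omega)$, transitivity of inclusion gives $\infty(\pi_\Omega)\subseteq\infty(\zeta_\Omega)$, and $(j\circ i)_\Omega$ is exactly the inclusion $\infty(\pi_\Omega)\hookrightarrow\infty(\zeta_\Omega)$; hence $j\circ i$ is again a morphism of quotient algebras. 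For an object $(G,\pi)$ put $1_{(G,\pi)}:=(1_{\infty(\pi_\Omega)})_{\Omega\in\OR}$, which is a morphism $(G,\pi)\ra(G,\pi)$ by reflexivity of $\subseteq$. The unit laws $i\circ 1_{(G,\pi)}=i=1_{(H,\eta)}\circ i$ and associativity $(k\circ j)\circ i=k\circ(j\circ i)$ then hold componentwise, since they hold for composition of ordinary maps; alternatively they are automatic because every hom-set is a singleton. This verifies all the axioms, so $\textsc{QAlg}(\Coo(-)^I)$ is a category.

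I do not expect any genuine obstacle: the content is only that the relation ``$\infty(\pi_\Omega)\subseteq\infty(\eta_\Omega)$ for all $\Omega\in\OR$'' is a preorder on the class of quotient subalgebras of $\Coo(-)^I$, and a preorder is in particular a (thin) category. The one step deserving an explicit line is the stability of morphisms under composition, i.e.\ that the composite family $(j_\Omega\circ i_\Omega)_\Omega$ still has the shape required by Def.~\ref{def:arrowsQuotientAlg}, which is just transitivity of set inclusion.
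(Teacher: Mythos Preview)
Your proposal is correct, and in fact the paper states this lemma without any proof at all, treating it as immediate from Def.~\ref{def:quotientAlgebras} and Def.~\ref{def:arrowsQuotientAlg}. Your explicit check that the data define a thin category (equivalently, that ``$\infty(\pi_{\Omega})\subseteq\infty(\eta_{\Omega})$ for all $\Omega\in\OR$'' is a preorder) is exactly the routine verification the paper leaves to the reader.
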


Therefore, a co-universal quotient algebra $(G,\pi)$ (when it exists)
has the smallest class of infinities. We will see that it necessarily
follows it has the largest kernel as well.

In the following theorem, we use the notation $[-]_{\Omega}:\,(x_{\eps})\in{}\rhoMod(\Omega)\longmapsto[x_{\eps}]_{\Omega}\in{}\rhoGs(\Omega)$
for all $\Omega\in\OR$.
\begin{thm}
\label{thm: 2nd up of CGS}Assume that:
\begin{enumerate}
\item \label{enu:1st assump}$(G,\pi)\in\textsc{QAlg}(\Coo{}^{I})$ is a
quotient algebra;
\item \label{enu:2nd assump}Every representative of $0$ in $(G,\pi)$
is \textcolor{black}{infinitesimal, i.e.~Def.~\ref{def:reprZeroInf}
holds;}
\item \label{enu:6th assump}If $(u_{\eps})\in M(\R)\cap\R^{I}$ then $\exists(v_{\eps})\in\infty(\pi_{\R})\,\forall^{0}\eps:\,|u_{\eps}|\leq v_{\eps}$
(constant nets are bounded by infinities).
\end{enumerate}
For all open set $\Omega\subseteq\R^{n}$, we also assume that:
\begin{enumerate}[resume]
\item \label{enu:3rd assump}$(\rho_{\eps}^{-1})\in M(\Omega)$;
\item \label{enu:4th  assump}$\forall(u_{\eps})\in M(\Omega)\,\forall K\Subset\Omega\,\forall\alpha\in\N^{n}:\ p_{K\alpha}(u_{\eps})\in M(\R)\cap\R^{I}$;
\item \label{enu:5th assump}\textcolor{black}{Let $(u_{\varepsilon})\in\Coo(\Omega)^{I}$.
If for any $K\Subset\Omega$, $\alpha\in\N^{n}$ there exists $(v_{\varepsilon})\in\text{\ensuremath{\infty}}(M(\Omega)\cap\R^{I})$
such that $\forall^{0}\varepsilon:\ p_{K,\alpha}(u_{\varepsilon})\leq v_{\varepsilon}$,
then $(u_{\varepsilon})\in M(\Omega)$} (infinities determine $M(\Omega)$);
\item \label{enu:7th assump}$(G,\pi)$ is co-universal among all the quotient
algebras verifying the previous conditions.
\end{enumerate}
Then $G(\Omega)\simeq{}\rhoGs(\Omega)$ as $\R$-algebras, i.e.~in
the category $\mathbf{ALG}_{\R}$. Moreover, $\infty(\pi_{\R})=\infty([-]_{\R})$,
$\text{\emph{Ker}}(\pi_{\Omega})=\text{\emph{Ker}}([-]_{\Omega})$.
\end{thm}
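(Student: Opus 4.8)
The plan is to show that the Colombeau functor $(\rhoGs(-),[-])$ is itself an object of $\textsc{QAlg}(\mathcal{C}^{\infty I})$ satisfying hypotheses \ref{enu:2nd assump}--\ref{enu:5th assump}, and that it is \emph{itself} co-universal among all such objects; then assumption \ref{enu:7th assump} together with Thm.~\ref{thm:UPuniqueUpToIso} forces the given $(G,\pi)$ to be isomorphic to $(\rhoGs(-),[-])$ in $\textsc{QAlg}(\mathcal{C}^{\infty I})$, which means precisely that $\infty(\pi_{\Omega})=\infty([-]_{\Omega})$ for every $\Omega$ (in particular for $\Omega=\R$). From this, together with the hypotheses, one extracts $M(\Omega)=\rhoMod(\Omega)$ and $\mathrm{Ker}(\pi_{\Omega})=\rhoNeg(\Omega)$, and the algebra isomorphism then follows from \eqref{eq:isomorphism}.

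First I would check that $(\rhoGs(-),[-])$ meets the hypotheses. That it is an object of $\textsc{QAlg}(\mathcal{C}^{\infty I})$ is Def.~\ref{def:CA}: $\rhoGs(-):(\OR)^{\text{op}}\ra\mathbf{ALG}_{\R}$ is a functor, $\rhoMod(\Omega)$ is a subalgebra of $\Coo(\Omega)^{I}$, and $[-]_{\Omega}$ is a surjective $\R$-algebra homomorphism. Hypothesis \ref{enu:2nd assump} holds because a representative of $0$ for $[-]_{\Omega}$ is by definition a negligible net, so $p_{K,\alpha}(z_{\eps})=O(\rho_{\eps}^{n})$ for every $n$ and in particular $p_{K,\alpha}(z_{\eps})\to 0$. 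Hypotheses \ref{enu:3rd assump} and \ref{enu:4th assump} are read off directly from the definition of $\rhoMod$, and \ref{enu:5th assump} holds since every $(v_{\eps})\in\infty(\rhoMod(\Omega)\cap\R^{I})$ is a moderate real net, so a bound $p_{K,\alpha}(u_{\eps})\le v_{\eps}$ (for $\eps$ small, for all $K$, $\alpha$) yields $p_{K,\alpha}(u_{\eps})=O(\rho_{\eps}^{-N})$; finally for \ref{enu:6th assump}, if $(u_{\eps})\in\rhoMod(\R)\cap\R^{I}$ with $|u_{\eps}|=O(\rho_{\eps}^{-N})$, then $(\rho_{\eps}^{-N-1})\in\infty([-]_{\R})$ dominates $|u_{\eps}|$ for $\eps$ small.

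To prove $(\rhoGs(-),[-])$ is co-universal, let $(G',\pi')$ be any object of $\textsc{QAlg}(\mathcal{C}^{\infty I})$ with domain $M'$ satisfying \ref{enu:2nd assump}--\ref{enu:5th assump}. A morphism $(\rhoGs(-),[-])\ra(G',\pi')$ is just the inclusion $\infty([-]_{\Omega})\hookrightarrow\infty(\pi'_{\Omega})$, which is unique once it exists, so it suffices to prove $\infty([-]_{\Omega})\subseteq\infty(\pi'_{\Omega})$ for every $\Omega$. By \ref{enu:3rd assump}, $(\rho_{\eps}^{-1})\in M'(\Omega)$, and since $M'(\Omega)$ is a subalgebra, $(\rho_{\eps}^{-m})\in M'(\Omega)\cap\R^{I}$, hence $(\rho_{\eps}^{-m})\in\infty(\pi'_{\Omega})$, for every $m$; if now $(v_{\eps})\in\infty([-]_{\Omega})$ is a moderate real net with $|v_{\eps}|=O(\rho_{\eps}^{-N})$ and $|v_{\eps}|\to\infty$, then, viewing it as a constant-in-$x$ net, $p_{K,\alpha}(v_{\eps})\le|v_{\eps}|\le\rho_{\eps}^{-N-1}$ for $\eps$ small and all $K$, $\alpha$, so \ref{enu:5th assump} applied to $(v_{\eps})$ gives $(v_{\eps})\in M'(\Omega)$, whence $(v_{\eps})\in\infty(\pi'_{\Omega})$. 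Thus $(\rhoGs(-),[-])$ is co-universal, and since by \ref{enu:7th assump} the given $(G,\pi)$ is also co-universal, Thm.~\ref{thm:UPuniqueUpToIso} yields $\infty(\pi_{\Omega})=\infty([-]_{\Omega})$ for every $\Omega$. From here $M(\Omega)=\rhoMod(\Omega)$: the inclusion $\supseteq$ is the computation just made (with $M$ in place of $M'$), while for $\subseteq$, given $(u_{\eps})\in M(\Omega)$, \ref{enu:4th assump} gives $p_{K,\alpha}(u_{\eps})\in M(\R)\cap\R^{I}$ and then \ref{enu:6th assump} dominates it by some $(v_{\eps})\in\infty(\pi_{\R})=\infty([-]_{\R})$, a moderate real net, forcing $p_{K,\alpha}(u_{\eps})=O(\rho_{\eps}^{-N})$. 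Also $\mathrm{Ker}(\pi_{\Omega})\subseteq\rhoNeg(\Omega)$: if $\pi_{\Omega}(z_{\eps})=0$ then also $\pi_{\Omega}(\rho_{\eps}^{-m}z_{\eps})=\pi_{\Omega}(\rho_{\eps}^{-m})\cdot 0=0$, so by \ref{enu:2nd assump} $p_{K,\alpha}(z_{\eps})\,\rho_{\eps}^{-m}=p_{K,\alpha}(\rho_{\eps}^{-m}z_{\eps})\to 0$ for every $m$, i.e.\ $(z_{\eps})\in\rhoNeg(\Omega)$. Granting the reverse inclusion, $\mathrm{Ker}(\pi_{\Omega})=\rhoNeg(\Omega)$, and then \eqref{eq:isomorphism} gives $G(\Omega)\simeq M(\Omega)/\mathrm{Ker}(\pi_{\Omega})=\rhoMod(\Omega)/\rhoNeg(\Omega)=\rhoGs(\Omega)$ in $\mathbf{ALG}_{\R}$.

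The step I expect to be the main obstacle is the reverse inclusion $\rhoNeg(\Omega)\subseteq\mathrm{Ker}(\pi_{\Omega})$, equivalently the assertion announced before the statement that the co-universal (minimal-infinities) quotient must carry the \emph{largest} possible kernel: hypotheses \ref{enu:2nd assump}--\ref{enu:5th assump} together with $\infty(\pi_{\Omega})=\infty([-]_{\Omega})$ pin down the domain $M(\Omega)=\rhoMod(\Omega)$ and bound $\mathrm{Ker}(\pi_{\Omega})$ only from \emph{above} by $\rhoNeg(\Omega)$. The point to make precise is that for a quotient algebra with domain $\rhoMod(\Omega)$ the class of infinities is unchanged if one replaces $\mathrm{Ker}(\pi_{\Omega})$ by the largest ideal of $\rhoMod(\Omega)$ all of whose elements are infinitesimal — condition \ref{enu:2nd assump} still holds for the enlarged quotient — and by the ``multiply by $\rho_{\eps}^{-m}$'' argument used above this largest ideal is exactly $\rhoNeg(\Omega)$; co-universality, i.e.\ comparison with this maximal choice (which is $(\rhoGs(-),[-])$ up to isomorphism in $\textsc{QAlg}(\mathcal{C}^{\infty I})$), then forces $\rhoNeg(\Omega)\subseteq\mathrm{Ker}(\pi_{\Omega})$.
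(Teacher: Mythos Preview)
Your overall strategy coincides with the paper's: verify that $(\rhoGs(-),[-])$ satisfies \ref{enu:1st assump}--\ref{enu:5th assump}, show that for any competitor $(G',\pi')$ one has $\infty([-]_{\R})\subseteq\infty(\pi'_{\R})$ (using \ref{enu:3rd assump} to get $(\rho_{\eps}^{-N})\in\infty(\pi'_{\R})$ and then \ref{enu:5th assump}), conclude co-universality of Colombeau, and then for the given co-universal $(G,\pi)$ deduce $\infty(\pi_{\R})=\infty([-]_{\R})$, $M(\Omega)=\rhoMod(\Omega)$, and $\mathrm{Ker}(\pi_{\Omega})\subseteq\rhoNeg(\Omega)$. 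The paper organises the middle step via the formula $M(\Omega)=\mathcal{E}_{\text{M}}(\infty(\pi_{\R}),\Omega)$, and obtains the kernel inclusion from the bound $p_{K,\alpha}(z_{\eps})\le J_{\eps}^{-1}$ valid for every $J_{\eps}\in\infty(\pi_{\R})\supseteq\infty([-]_{\R})$; your ``multiply by $\rho_{\eps}^{-m}$'' version is the same argument specialised to $J_{\eps}=\rho_{\eps}^{-m}$.

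The genuine weak point is your final paragraph. Morphisms in $\textsc{QAlg}(\mathcal{C}^{\infty I})$ are, by Def.~\ref{def:arrowsQuotientAlg}, nothing but inclusions of the sets $\infty(\pi_{\Omega})$; they carry no information about kernels. Once you have shown $M(\Omega)=\rhoMod(\Omega)$ and $\infty(\pi_{\Omega})=\infty([-]_{\Omega})$, the objects $(G,\pi)$ and $(\rhoGs,[-])$ are already isomorphic in $\textsc{QAlg}(\mathcal{C}^{\infty I})$ regardless of whether $\mathrm{Ker}(\pi_{\Omega})$ equals $\rhoNeg(\Omega)$ or is strictly smaller; so ``comparison with the maximal choice'' via co-universality cannot force the reverse inclusion $\rhoNeg(\Omega)\subseteq\mathrm{Ker}(\pi_{\Omega})$. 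Your proposed mechanism therefore does not close the gap. It is worth noting that the paper itself is terse at precisely this point: after proving only $\mathrm{Ker}(\pi_{\Omega})\subseteq\mathrm{Ker}([-]_{\Omega})$, it writes the chain $M(\Omega)/\mathrm{Ker}(\pi_{\Omega})=\mathcal{E}_{\text{M}}(\infty([-]_{\R}),\Omega)/\mathrm{Ker}([-]_{\Omega})$ without supplying a separate argument for the opposite inclusion, so your diagnosis of where the difficulty lies is accurate even if your resolution is not.
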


\begin{proof}
Conditions \ref{enu:6th assump}, \ref{enu:4th  assump}, \ref{enu:5th assump}
are equivalent to 
\begin{align*}
M(\Omega) & =\mathcal{E}_{\text{M}}(\infty(\pi_{\R}),\Omega)\\
 & :=\left\{ (u_{\eps})\in\mathcal{C}^{\infty}(\Omega)^{I}\mid\forall K,\,\alpha\,\exists(v_{\eps})\in\infty(\pi_{\R})\,\forall^{0}\eps:\,p_{K\alpha}(u_{\eps})\leq v_{\eps}\right\} .
\end{align*}
In fact assumptions \ref{enu:6th assump}, \ref{enu:4th  assump}
yield $M(\Omega)\subseteq\mathcal{E}_{\text{M}}(\infty(\pi_{\R}),\Omega)$,
whereas \ref{enu:5th assump} gives the opposite inclusion.

The Colombeau algebra ${}\rhoGs(\Omega)$ satisfies conditions \ref{enu:1st assump}-\ref{enu:5th assump}.
Now, we prove that it also satisfies condition \ref{enu:7th assump}.
Let $(G,\pi)$ be another quotient algebra satisfying conditions \ref{enu:1st assump}-\ref{enu:5th assump},
and take $(x_{\eps})\in\infty([-]_{\R})$, so that $(x_{\eps})$ is
an infinite but moderate net: 
\[
\exists N\in\N\,\forall^{0}\eps:\,|x_{\eps}|\leq\rho_{\eps}^{-N}.
\]
From assumption \ref{enu:3rd assump}, we have that $(\rho_{\eps}^{-1})\in M(\Omega)$,
and hence $(\rho_{\eps}^{-N})$ as well, since $M(\Omega)$ is a subalgebra
of $\mathcal{C}^{\infty}(\Omega)^{I}$. Thereby $(\rho_{\eps}^{-N})\in\infty(\pi_{\R})$.
By using condition \ref{enu:5th assump} with $u_{\eps}(-)\equiv x_{\eps}$
and $v_{\eps}(-)\equiv\rho_{\eps}^{-N}$, we get that $(x_{\eps})\in\infty(\pi_{\R})$.
This shows that $\infty([-]_{\R})\subseteq\infty(\pi_{\R})$, i.e.~${}\rhoGs(\Omega)$
is co-universal. Note that this only implies (from Thm.~\ref{thm:UPuniqueUpToIso})
that $G\simeq\rhoGs$ in $\textsc{QAlg}(\Coo{}^{I})$, which is not
our final claim. Moreover, we never used condition \ref{enu:2nd assump}
so far.

We now prove that $\text{Ker}(\pi_{\Omega})\subseteq\text{Ker}([-]_{\Omega})$:
Let $\pi_{\Omega}(z_{\eps})=0$. We have already seen that this and
\ref{enu:2nd assump} imply 
\begin{equation}
\forall J_{\eps}\in\infty(\pi_{\R})\,\forall K,\alpha\,\forall^{0}\eps:\,p_{K\alpha}(z_{\eps})\leq J_{\eps}^{-1}.\label{eq:4.6-1}
\end{equation}
Since we have already proved that $\infty([-]_{\R})\subseteq\infty(\pi_{\R})$,
\eqref{eq:4.6-1} yields $[z_{\eps}]_{\R}=0$. Therefore, if $(G,\pi)$
is also a co-universal solution, formula \eqref{eq:isomorphism} gives
\begin{multline*}
G(\Omega)\simeq M(\Omega)/\text{Ker}(\pi_{\Omega})=\mathcal{E}_{\text{M}}(\infty(\pi_{\R}),\Omega)/\text{Ker}(\pi_{\Omega})\\
=\mathcal{E}_{\text{M}}([-]_{\R},\Omega)/\text{Ker}([-]_{\Omega})={}\rhoGs(\Omega).
\end{multline*}
\end{proof}

\subsubsection{\label{subsec:A-particular-case}A particular case: co-universal
property of Robinson-Colombeau generalized numbers}

Proceeding as in Sec.~\ref{subsec:ColAsQuotientAlg}, we obtain a
co-universal property of the ring of Robinson-Colombeau generalized
numbers, i.e.~the ring of scalars of Colombeau theory with an arbitrary
gauge $\rho$.
\begin{defn}
\label{def:RCring}~
\begin{enumerate}
\item $\R_{\rho}:=\left\{ (x_{\varepsilon})\in\R^{I}\mid\exists N\in\N:\;x_{\varepsilon}=O(\rho_{\varepsilon}^{-N})\;\text{as \ensuremath{\varepsilon\to0^{+}}}\right\} $
is called the set of $\rho$-\emph{moderate nets} of numbers.
\item Let $(x_{\varepsilon})$, $(y_{\varepsilon})\in\R_{\rho}$. We write
$(x_{\varepsilon})\sim_{\rho}(y_{\varepsilon})$ if $\forall n\in\N:\;x_{\varepsilon}-y_{\varepsilon}=O(\rho_{\varepsilon}^{-n})\text{ as \ensuremath{\varepsilon\to0^{+}}}$.
It is easy to prove that $\sim_{\rho}$ is a congruence relation on
the ring $\R_{\rho}$ of moderate nets with respect to pointwise operations.
\item The \emph{Robinson-Colombeau ring} \emph{of generalized numbers} is
defined as $\rti:=\R_{\rho}\slash\sim_{\rho}$. The equivalence class
defined by $(x_{\varepsilon})\in\R_{\rho}$ is simply denoted as $[x_{\varepsilon}]\in\rti$.
\end{enumerate}
\end{defn}

\noindent Clearly, the ring of Robinson-Colombeau is isomorphic to
the subring of Colombeau generalized functions $f\in\rhoGs(\R)$ whose
derivative is zero $f'=[f'_{\eps}]=0$, see e.g.~\cite{GKOS}.

Similarly to the category $\textsc{QAlg}(\Coo{}^{I})$ we can now
introduce the category of quotient subrings of $\R^{I}$:
\begin{defn}
We say that $(G,\pi)$ is a \emph{quotient subring of }$\mathbb{R}^{I}$,
and we write $(G,\pi)\in\textsc{QRing}(\R^{I})$, if
\begin{enumerate}
\item $G$ is a ring;
\item $\pi:\mathcal{R}\ra G$ is an epimorphism of rings, where the domain
$\mathcal{R}\subseteq\R^{I}$ is a subring of $\R^{I}$.
\end{enumerate}
Let $\left(H,\eta\right)\in\textsc{QRing}(\R^{I})$. Then a \emph{morphism
of quotient rings} $i:(G,\pi)\longrightarrow(H,\eta)$ is given by
an inclusion
\begin{equation}
i:\infty(\pi)\hookrightarrow\infty(\eta),
\end{equation}
where $\infty(\pi):=\infty(\mathcal{R})$. Similarly to Def.~\ref{def:reprZeroInf},
we say that \emph{every representative of zero in $(G,\pi)$ is infinitesimal}
if for all representatives of zero, i.e.~$(z_{\eps})\in\mathcal{R}$
such that $\pi(z_{\eps})=0\in G$, we have $\lim_{\eps\to0^{+}}z_{\eps}=0$.

The ring $\rti$ of Robinson-Colombeau is, up to isomorphisms of rings,
the simplest quotient ring where every representative of zero is infinitesimal.
This implies to have the smallest class of infinities, and consequently,
the largest kernel. The proof is simply a particular case of that
of Thm.~\ref{thm: 2nd up of CGS}:
\end{defn}

\begin{thm}
\label{thm:RCring}Assume that:
\begin{enumerate}
\item \label{enu:1st assump-2}$(G,\pi)$ is a \emph{quotient subring of
}$\mathbb{R}^{I}$;
\item \label{enu:2nd assump-2}Every representative of $0$ in $(G,\pi)$
is \textcolor{black}{infinitesimal;}
\item \label{enu:5th assump-2} If $(x_{\varepsilon})\in\mathcal{R}$, then
$\exists(v_{\varepsilon})\in\text{\ensuremath{\infty}}(\mathcal{R})\,\forall^{0}\varepsilon:\ |x_{\varepsilon}|\leq v_{\varepsilon}$
(nets are bounded by infinities);
\item \label{enu:3rd assump-2}$(\rho_{\eps}^{-1})\in\mathcal{R}$;
\item \label{enu:4th  assump-2}Let $(x_{\varepsilon})\in\R^{I}$. If there
exist $(v_{\varepsilon})\in\text{\ensuremath{\infty}}(\mathcal{R})$
such that $\forall^{0}\varepsilon:\ |u_{\varepsilon}|\leq v_{\varepsilon}$,
then $(u_{\varepsilon})\in\mathcal{R}$ (infinities determine $\mathcal{R}$).
\item $(G,\pi)$ is co-universal among all the quotient rings satisfying
the previous conditions.
\end{enumerate}
Then, $G\simeq\rti$ as rings. Moreover, $\infty(\mathcal{R})=\infty(\R_{\rho})$
is the smallest class of infinities and $\text{\emph{Ker}\ensuremath{(\pi)=\text{\emph{Ker}}}}([-])$
is the largest kernel.
\end{thm}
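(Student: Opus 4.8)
The plan is to obtain Theorem~\ref{thm:RCring} as the $0$-dimensional specialisation of the proof of Theorem~\ref{thm: 2nd up of CGS}: the open set $\Omega$ and the seminorms $p_{K,\alpha}$ disappear, $M(\Omega)$ is replaced throughout by the subring $\mathcal{R}\subseteq\R^{I}$, $\rhoMod(\Omega)$ by $\R_{\rho}$, and $\sup_{x\in K}|\partial^{\alpha}(\cdot)|$ by $|\cdot|$. The first step is to record the analogue of the opening observation of that proof: hypotheses \ref{enu:5th assump-2} and \ref{enu:4th  assump-2} together say exactly that
\[
\mathcal{R}=\left\{(x_{\eps})\in\R^{I}\mid\exists(v_{\eps})\in\infty(\mathcal{R})\,\forall^{0}\eps:\ |x_{\eps}|\le v_{\eps}\right\},
\]
i.e.~$\mathcal{R}$ is completely determined by its class $\infty(\mathcal{R})$ of infinite nets (the inclusion ``$\subseteq$'' is \ref{enu:5th assump-2}, the inclusion ``$\supseteq$'' is \ref{enu:4th  assump-2}); the same description holds for $\R_{\rho}$. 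Then I would check that $(\rti,[-])$ itself satisfies \ref{enu:1st assump-2}--\ref{enu:4th  assump-2}: $\R_{\rho}$ is a subring of $\R^{I}$ and $[-]\colon\R_{\rho}\to\rti$ an epimorphism of rings; if $[z_{\eps}]=0$ then $z_{\eps}=O(\rho_{\eps}^{n})$ for every $n\in\N$, so $z_{\eps}\to 0$, giving \ref{enu:2nd assump-2}; each $(x_{\eps})\in\R_{\rho}$ satisfies $|x_{\eps}|\le\rho_{\eps}^{-N}$ eventually with $(\rho_{\eps}^{-\max(N,1)})\in\infty(\R_{\rho})$, giving \ref{enu:5th assump-2}; \ref{enu:3rd assump-2} is immediate; and if $|x_{\eps}|\le v_{\eps}$ eventually with $(v_{\eps})\in\infty(\R_{\rho})$, then $v_{\eps}=O(\rho_{\eps}^{-N})$ forces $(x_{\eps})\in\R_{\rho}$, giving \ref{enu:4th  assump-2}.

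The core is the co-universality of $(\rti,[-])$ together with the comparison of kernels. For co-universality, let $(G,\pi)$ with domain $\mathcal{R}$ satisfy \ref{enu:1st assump-2}--\ref{enu:4th  assump-2}. By \ref{enu:3rd assump-2} and the fact that $\mathcal{R}$ is a ring, $(\rho_{\eps}^{-N})\in\mathcal{R}$, hence $(\rho_{\eps}^{-N})\in\infty(\mathcal{R})$, for every $N\in\N$. Given $(x_{\eps})\in\infty(\R_{\rho})$, choose $N$ with $|x_{\eps}|\le\rho_{\eps}^{-N}$ eventually; then \ref{enu:4th  assump-2} yields $(x_{\eps})\in\mathcal{R}$, hence $(x_{\eps})\in\infty(\mathcal{R})$. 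Thus $\infty([-])=\infty(\R_{\rho})\subseteq\infty(\mathcal{R})=\infty(\pi)$, which is precisely the (unique) morphism $(\rti,[-])\to(G,\pi)$ in $\textsc{QRing}(\R^{I})$, so $\rti$ is co-universal and carries the smallest class of infinities. For the kernels I would reuse the multiplicative argument of Theorem~\ref{thm: 2nd up of CGS}: if $\pi(z_{\eps})=0$, then for each $n\in\N$ the net $(\rho_{\eps}^{-n})$ lies in $\infty(\R_{\rho})\subseteq\infty(\mathcal{R})$, so $(z_{\eps}\rho_{\eps}^{-n})\in\mathcal{R}$ and $\pi(z_{\eps}\rho_{\eps}^{-n})=\pi(z_{\eps})\pi(\rho_{\eps}^{-n})=0$; thus $(z_{\eps}\rho_{\eps}^{-n})$ is a representative of $0$, and \ref{enu:2nd assump-2} forces $z_{\eps}\rho_{\eps}^{-n}\to 0$, i.e.~$|z_{\eps}|\le\rho_{\eps}^{n}$ for $\eps$ small. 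Since $n$ is arbitrary, $[z_{\eps}]=0$; this proves $\text{Ker}(\pi)\subseteq\text{Ker}([-])$, i.e.~$\rti$ also has the largest kernel.

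Finally, assume $(G,\pi)$ is itself co-universal among all the quotient rings verifying the previous conditions. Applying Theorem~\ref{thm:UPuniqueUpToIso} inside $\textsc{QRing}(\R^{I})$ gives $\infty(\pi)=\infty([-])$, hence $\mathcal{R}=\R_{\rho}$ by the description of the first step; combined with $\text{Ker}(\pi)\subseteq\text{Ker}([-])$ and its converse (which, as in Theorem~\ref{thm: 2nd up of CGS}, is obtained from \ref{enu:2nd assump-2} applied to the quotient of $G$ by $\pi(\text{Ker}([-]))$, whose representatives of $0$ are exactly the $\rho$-negligible nets, hence infinitesimal), this yields $\text{Ker}(\pi)=\text{Ker}([-])$. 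The first isomorphism theorem for the epimorphism $\pi$ — the ring analogue of \eqref{eq:isomorphism} — then gives $G\simeq\mathcal{R}/\text{Ker}(\pi)=\R_{\rho}/\sim_{\rho}\ =\rti$ as rings, with $\infty(\mathcal{R})=\infty(\R_{\rho})$ the smallest class of infinities and $\text{Ker}(\pi)$ the largest kernel. I expect the genuinely delicate point to be precisely this last kernel identity: the inclusion $\text{Ker}(\pi)\subseteq\text{Ker}([-])$ is the short multiplicative argument above, whereas the converse — that a co-universal solution cannot quotient by \emph{less} than the negligible nets — is where hypothesis \ref{enu:2nd assump-2} must be exploited in full strength, exactly as in the algebra case.
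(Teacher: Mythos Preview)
Your overall strategy is exactly what the paper intends: it declares that the proof ``is simply a particular case of that of Thm.~\ref{thm: 2nd up of CGS}'', and your proposal carries out that specialisation step by step---replacing $M(\Omega)$ by $\mathcal{R}$ and $p_{K,\alpha}$ by $|\cdot|$, checking that $(\rti,[-])$ satisfies \ref{enu:1st assump-2}--\ref{enu:4th  assump-2}, proving $\infty([-])\subseteq\infty(\pi)$ via $(\rho_\eps^{-N})$, and obtaining $\text{Ker}(\pi)\subseteq\text{Ker}([-])$ by the multiplicative trick. All of this mirrors the paper faithfully.

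The one step where you go beyond the paper, and where your argument does not work, is the converse inclusion $\text{Ker}([-])\subseteq\text{Ker}(\pi)$. You say this follows ``as in Theorem~\ref{thm: 2nd up of CGS}'' via the quotient $G/\pi(\text{Ker}([-]))$, but the proof of Theorem~\ref{thm: 2nd up of CGS} contains no such step: it simply writes the final chain $M/\text{Ker}(\pi)=\rhoMod/\text{Ker}([-])$ without isolating this direction. And your construction cannot deliver it either: the quotient $(G/\pi(\text{Ker}([-])),\tilde\pi)$ does satisfy \ref{enu:1st assump-2}--\ref{enu:4th  assump-2}, but morphisms in $\textsc{QRing}(\R^{I})$ are by definition merely inclusions $\infty(\pi)\hookrightarrow\infty(\eta)$, so co-universality of $(G,\pi)$ constrains only infinity classes, never kernels. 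Concretely, the pair $(\R_{\rho},\text{id})$ satisfies all of \ref{enu:1st assump-2}--\ref{enu:4th  assump-2} (its sole representative of $0$ is the zero net, trivially infinitesimal) and has $\infty(\text{id})=\infty(\R_{\rho})=\infty([-])$, hence is equally co-universal in this category, yet $\text{Ker}(\text{id})=\{0\}\subsetneq\text{Ker}([-])$. Your instinct that this is ``the genuinely delicate point'' is correct; the resolution you sketch does not close the gap within the category as defined.
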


\noindent See \cite{Tod09,Tod13} for a characterization up to isomorphisms
of the \emph{field} of scalars one has in the nonstandard approach
to Colombeau theory.

\section{\label{sec:GSF}Universal property of spaces of generalized smooth
functions}

Generalized smooth functions (GSF) are the simplest way to deal with
a very large class of generalized functions and singular problems,
by working directly with all their $\rho$-moderate smooth regularizations.
GSF are close to the historically original conception of generalized
function, \cite{Dir26,Lau89,Kat-Tal12}: in essence, the idea of authors
such as Dirac, Cauchy, Poisson, Kirchhoff, Helmholtz, Kelvin and Heaviside
(who informally worked with ``numbers'' which also comprise infinitesimals
and infinite scalars) was to view generalized functions as certain
types of smooth set-theoretical maps obtained from ordinary smooth
maps by introducing a dependence on suitable infinitesimal or infinite
parameters. For example, the density of a Cauchy-Lorentz distribution
with an infinitesimal scale parameter was used by Cauchy to obtain
classical properties which nowadays are attributed to the Dirac delta,
\cite{Kat-Tal12}. More generally, in the GSF approach, generalized
functions are seen as set-theoretical functions defined on, and attaining
values in, the non-Archimedean ring of scalars $\rti$. The calculus
of GSF is closely related to classical analysis sharing several properties
of ordinary smooth functions. On the other hand, GSF include all Colombeau
generalized functions and hence also all Schwartz distributions \cite{GKV15,GKOS,GKV24}.
They allow nonlinear operations on generalized functions and unrestricted
composition \cite{GKV15,GKV24}. They enable to prove a number of
analogues of theorems of classical analysis for generalized functions:
e.g., mean value theorem, intermediate value theorem, extreme value
theorem, Taylor’s theorems, local and global inverse function theorems,
integrals via primitives, and multidimensional integrals \textcolor{red}{\cite{Gio-Kun18,Gio-Kun17,GKV24}}.
With GSF we can develop calculus of variations and optimal control
for generalized functions, with applications e.g.~in collision mechanics,
singular optics, quantum mechanics and general relativity, see \cite{LLG,Gastao2022}
and \cite{GaVe,KKO:08} for a comparison with CGF. We have new existence
results for nonlinear singular ODE and PDE (e.g.~a Picard-Lindelöf
theorem for PDE), \cite{Lu-Gi16,GiLu}, and with the notion of \emph{hyperfinite
Fourier transform} we can consider the Fourier transform of any GSF,
without restriction to tempered type, \cite{MTG}. GSF with their
particular sheaf property define a Grothendieck topos, \cite{GKV24}.
\begin{defn}
\label{def:GSF}Let $X\subseteq\rti^{n}$ and $Y\subseteq\rti^{d}$.
We say that $f:X\rightarrow Y$ is a GSF ($f\in\gsf(X,Y)$), if
\begin{enumerate}
\item $f:X\rightarrow Y$ is a set-theoretical function
\item There exists a net $(f_{\eps})\in\mathcal{C}^{\infty}(\R^{n},\R^{d})$
such that for all $[x_{\eps}]\in X$ (i.e.~for all representatives
$(x_{\eps})$ of any point $x=[x_{\eps}]\in X$):
\begin{enumerate}
\item $f(x)=[f_{\eps}(x_{\eps})]$ (we say that $f$ \emph{is defined by
the net }$(f_{\eps})$)
\item $\forall\alpha\in\N^{n}:\ \left(\partial^{\alpha}f_{\eps}(x_{\eps})\right)$
is $\rho$-moderate.
\end{enumerate}
\end{enumerate}
\end{defn}

Following \cite[Def.~3.1]{Ver08} it is possible to give an equivalent
definition of GSF as a quotient set: therefore a co-universal characterization
similar to the previous ones given for CGF (Sec.~\ref{sec:Colombeau})
is possible. However, in the present section we want to present a
universal property of spaces of GSF as the simplest way to have set-theoretical
functions defined on generalized numbers and having arbitrary derivatives.
As we will see below, this property is important because it formalizes
the idea that GSF contains all the possible $\rho$-moderate regularizations,
e.g.~as obtained by convolution with a mollifier of the form $\frac{1}{\rho_{\eps}}\mu\left(\frac{x}{\rho_{\eps}}\right)$,
see e.g.~\cite{GKV24}.

The ring of scalars $\rti$ is hence the basic building block in the
definition of GSF. Using the results of Sec.~\ref{subsec:A-particular-case},
in this section we could use any co-universal solution of Thm.~\ref{thm:RCring},
but that would only result into a useless abstract language, so that
we work directly with $\rti$, as defined in Def.~\ref{def:RCring}.

First of all, derivatives of GSF are well-defined on so-called sharply
open sets: Let $x$, $y\in\rti$, we write $x\leq y$ if for all representative
$[x_{\eps}]=x$, there exists $[y_{\eps}]=y$ such that $\forall^{0}\eps:\ x_{\eps}\leq y_{\eps}$;
on $\rti^{n}$, we consider the natural extension of the Euclidean
norm, i.e. $|[x_{\eps}]|:=[|x_{\eps}|]\in\rti$. Even if this generalized
norm takes value in $\rti$, it shares essential properties with classical
norms, like the triangle inequality and absolute homogeneity. It is
therefore natural to consider on $\rti^{n}$ the topology generated
by balls $B_{r}(x):=\left\{ y\in\rti\mid|x-y|<r\right\} $, for $r\in\rti_{\ge0}$
and invertible, which is called \textit{sharp topology}, and its elements
\emph{sharply open sets}.
\begin{thm}
\label{thm:derGSF}Let $U\subseteq\rcrho^{n}$ be a sharply open set
and $\alpha\in\N^{n}$, then the map given by
\[
\partial^{\alpha}:[f_{\eps}(-)]\in\gsf(U,\rti^{d})\mapsto[\partial^{\alpha}f_{\eps}(-)]\in\gsf(U,\rti^{d})
\]
is well-defined, i.e.~it does not depend on the net of smooth functions
$(f_{\eps})$ that defines the GSF $[f_{\eps}(-)]:x=[x_{\eps}]\in U\mapsto[f_{\eps}(x_{\eps})]\in\rti^{d}$.
\end{thm}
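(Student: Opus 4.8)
The plan is to reduce the statement to its essential case and then run a Taylor‑expansion argument. Writing $h_\eps:=f_\eps-g_\eps$ whenever $(f_\eps)$ and $(g_\eps)$ are two nets defining the same $f\in\gsf(U,\rti^d)$, the net $(h_\eps)$ defines the zero GSF on $U$ (i.e.\ $(h_\eps(x_\eps))$ is $\rho$‑negligible for every representative $(x_\eps)$ of every $x\in U$) and still has all derivatives $\rho$‑moderate along such representatives; by linearity it suffices to show that then $(\partial^\alpha h_\eps(x_\eps))$ is $\rho$‑negligible for every such $(x_\eps)$. Working componentwise reduces to $d=1$, and an induction on $|\alpha|$ using $\partial^\alpha h_\eps=\partial_k\partial^{\alpha-e_k}h_\eps$ (the inductive hypothesis being that $(\partial^{\alpha-e_k}h_\eps)$ defines the zero GSF and has all derivatives moderate) reduces everything to the case $\alpha=e_k$ of a single first‑order partial derivative of a scalar net. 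One also checks, by the standard mean value argument, that $x\mapsto[\partial^\alpha f_\eps(x_\eps)]$ does not depend on the representative of $x$, so that $[\partial^\alpha f_\eps(-)]$ is genuinely a GSF; this is routine and I omit it.

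Next I would record the uniform estimates that sharp‑openness supplies. Fix $x=[x_\eps]\in U$ and a sharp ball $B_r(x)\subseteq U$ with $r$ invertible, so that $r_\eps\ge\rho_\eps^{q}$ for all small $\eps$, for a suitable representative of $r$ and some $q\in\N$. Then for any $M\in\N$ with $M>q$ and any net $(\phi_\eps)\in\Coo(\R^n,\R)$ that is moderate (resp.\ negligible) along every representative of every point of $B_r(x)$, one has $\exists N\,\forall^{0}\eps:\ \sup_{|y-x_\eps|\le\rho_\eps^{M}}|\phi_\eps(y)|\le\rho_\eps^{-N}$ (resp.\ for every $p\in\N$, $\forall^{0}\eps:\ \sup_{|y-x_\eps|\le\rho_\eps^{M}}|\phi_\eps(y)|\le\rho_\eps^{p}$). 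Both are proved by contradiction: a failure produces, along a sequence $\eps_j\downarrow 0$, points $y_{\eps_j}$ with $|y_{\eps_j}-x_{\eps_j}|\le\rho_{\eps_j}^{M}<r_{\eps_j}$ at which $\phi_{\eps_j}$ is too large; extending by $y_\eps:=x_\eps$ otherwise gives a net with $[y_\eps]\in B_r(x)\subseteq U$ along which $(\phi_\eps(y_\eps))$ is not moderate (resp.\ not negligible). The moderate case needs a diagonal choice of the $\eps_j$ over the values of $N$; the negligible case does not.

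Now the core estimate. Suppose, for the fixed $x=[x_\eps]$, that $(\partial_k h_\eps(x_\eps))$ is not negligible, i.e.\ $|\partial_k h_\eps(x_\eps)|>\rho_\eps^{m}$ for some fixed $m\in\N$ and cofinally many $\eps$. Apply the moderateness estimate to $\phi_\eps:=\partial_k^2 h_\eps$ with $M:=q+1$ to obtain $N$; since $\sup_{|y-x_\eps|\le\rho_\eps^{M'}}|\partial_k^2h_\eps(y)|$ is nondecreasing as $M'$ grows, the same $N$ bounds it for every $M'\ge q+1$. Choose $M':=\max(N+m+1,\,q+1)$ and $p:=M'+m+1$, apply the negligibility estimate to $h_\eps$ at exponents $M'$ and $p$, and Taylor‑expand in the $k$‑th variable:
\[
\partial_k h_\eps(x_\eps)=\frac{h_\eps(x_\eps+\rho_\eps^{M'}e_k)-h_\eps(x_\eps)}{\rho_\eps^{M'}}-\frac{\rho_\eps^{M'}}{2}\,\partial_k^2 h_\eps(\xi_\eps),\qquad |\xi_\eps-x_\eps|\le\rho_\eps^{M'}.
\]
The difference quotient is $\le 2\rho_\eps^{\,p-M'}=2\rho_\eps^{\,m+1}$ and the remainder is $\le\tfrac12\rho_\eps^{\,M'-N}\le\tfrac12\rho_\eps^{\,m+1}$, both for all small $\eps$; hence $|\partial_k h_\eps(x_\eps)|\le\rho_\eps^{m}$ for all small $\eps$, contradicting the non‑negligibility. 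Thus $(\partial_k h_\eps(x_\eps))$ is negligible for every representative of every $x\in U$, which closes the induction and the proof.

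The main obstacle is exactly why this is not a one‑line mean value argument: the second‑order Taylor remainder involves $\partial_k^2 h_\eps$, which is only $\rho_\eps^{-N}$‑moderate for an a priori unknown $N$, so the increment must be taken of size $\rho_\eps^{M'}$ with $M'$ chosen large \emph{relative to} $N$; this forces one to control the defining net uniformly on the whole $\eps$‑ball of radius $\rho_\eps^{M'}$ about $x_\eps$, not merely pointwise along sequences of representatives. Securing that uniformity — and using invertibility of the sharp radius $r$ to guarantee such infinitesimal balls sit inside $U$ — is the substantive part; the exponent bookkeeping is then routine.
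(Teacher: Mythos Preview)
The paper does not supply a proof of this theorem; it is stated and then used, with the argument implicitly deferred to the references \cite{GKV15,GKV24}. Your proof is correct and is essentially the standard one found in that literature: reduce by linearity and induction to a single first-order partial derivative of a scalar net, use sharp openness (invertibility of the radius) to upgrade the pointwise moderateness and negligibility hypotheses along representatives to uniform bounds on an infinitesimal ball of radius $\rho_\eps^{M}$ about $x_\eps$, and then balance the second-order Taylor remainder against the difference quotient by choosing $M'$ large relative to the moderateness exponent $N$ of $\partial_k^2 h_\eps$ and $p$ large relative to $M'$. The bookkeeping with $M'=\max(N+m+1,q+1)$ and $p=M'+m+1$ is clean, and your observation that the $N$ obtained at $M=q+1$ persists for all $M'\ge q+1$ (because the supremum shrinks with the ball) is exactly what makes the order of quantifiers work out.
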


For any sharply open set $U\subseteq\rti^{n}$, we set 
\[
\mathcal{M}_{U}^{d}:=\left\{ (f_{\eps})\in\Coo(\R^{n},\R^{d})^{I}\mid\forall[x_{\eps}]\in U\,\forall\alpha\in\N^{n}:\ \left(\partial^{\alpha}f_{\eps}(x_{\eps})\right)\in\R_{\rho}\right\} .
\]
We hence have a map $[-]_{\text{f}}$ that allows us to construct
GSF starting from nets of smooth functions: 
\[
[-]_{\text{f}}:(f_{\eps})\in\mathcal{M}_{U}^{d}\mapsto[f_{\eps}(-)]\in\gsf(U,\rti^{d}).
\]
By Def.~\ref{def:GSF} of GSF, this map is onto. Thanks to Thm.~\ref{thm:derGSF},
derivatives $\partial^{\alpha}:\gsf(U,\rti^{d})\ra\gsf(U,\rti^{d})\subseteq\Set(U,\rti^{d})$
are $\eps$-wise well-defined, i.e.~using the ring epimorphism $[-]:\R_{\rho}\ra\rti$,
the map $\pi^{\alpha}(f_{\eps}):[x_{\eps}]\in U\mapsto[\partial^{\alpha}f_{\eps}(x_{\eps})]\in\rti^{d}$
is defined for all nets $(f_{\eps})\in\mathcal{M}_{U}^{d}$ and makes
this diagram commute\textcolor{black}{
\[
\xymatrix{\gsf(U,\rti^{d})\ar[r]^{\partial^{\alpha}} & \Set(U,\rti^{d})\\
\mathcal{M}_{U}^{d}\ar[u]^{[-]_{\text{f}}}\ar[ru]_{\pi^{\alpha}}
}
\]
The space $\gsf(U,\rti^{d})$ and the maps $\partial^{\alpha}$, $[-]_{\text{f}}$
are the simplest way to make this diagram commute, i.e.~we have the
following}
\begin{thm}
\label{thm:GSFUniv}Let $U\subseteq\rcrho^{n}$ be a sharply open
set and $d\in\N$. If $G\in\Set$, $q$ and $(D^{\alpha})_{\alpha\in\N^{n}}$
are such that $D^{0}:G\hookrightarrow\Set(U,\rti^{d})$ is the inclusion
and for all $\alpha\in\N^{n}$:\textcolor{black}{
\[
\xymatrix{G\ar[r]^{D^{\alpha}\ \ \ \ \ \ } & \Set(U,\rti^{d})\\
\mathcal{M}_{U}^{d}\ar[u]^{q}\ar[ru]_{\pi^{\alpha}}
}
\]
where $q$ is surjective, then there exists one and only one $\phi:G\ra\gsf(U,\rti^{d})$
such that
\[
\xymatrix{G\ar[r]^{\phi\ \ \ \ \ \ \ \ } & \gsf(U,\rti^{d})\\
\mathcal{M}_{U}^{d}\ar[u]^{q}\ar[ru]_{[-]_{\text{f}}}
}
\]
and which preserves derivatives, i.e.~$\partial^{\alpha}\phi(F)=D^{\alpha}F$
for all $F\in G$ and all $\alpha\in\N^{n}$.}
\end{thm}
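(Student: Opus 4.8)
The plan is to read off $\phi$ from the surjectivity of $q$ together with the $\alpha=0$ instance of the hypothesis $D^{\alpha}\circ q=\pi^{\alpha}$, and then to check the two commuting triangles and uniqueness by a direct diagram chase. First I would note that for each $F\in G$ there is, by surjectivity of $q$, a net $(f_{\eps})\in\mathcal{M}_{U}^{d}$ with $q(f_{\eps})=F$, and that the $\alpha=0$ case of the assumption gives $D^{0}(F)=D^{0}(q(f_{\eps}))=\pi^{0}(f_{\eps})=[f_{\eps}(-)]_{\text{f}}$, which is an element of $\gsf(U,\rti^{d})$. Hence $D^{0}(G)\subseteq\gsf(U,\rti^{d})$; since $D^{0}\colon G\hookrightarrow\Set(U,\rti^{d})$ is the inclusion, this says precisely that (after this identification) $G$ is a set of genuine GSF, and I would define $\phi\colon G\to\gsf(U,\rti^{d})$ to be the corestriction of $D^{0}$, so that $\phi(F)$ is literally the function $D^{0}(F)$ now regarded as an element of $\gsf(U,\rti^{d})$. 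In particular $\phi$ is well defined with no independence-of-choice argument needed.

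Next I would verify the two triangles and derivative preservation. For $\phi\circ q=[-]_{\text{f}}$: for any $(f_{\eps})\in\mathcal{M}_{U}^{d}$ we have $\phi(q(f_{\eps}))=D^{0}(q(f_{\eps}))=\pi^{0}(f_{\eps})=[f_{\eps}(-)]_{\text{f}}$, the last step being the $\alpha=0$ case of the already-available square $\pi^{\alpha}=\partial^{\alpha}\circ[-]_{\text{f}}$. For derivative preservation: given $F\in G$, pick $(f_{\eps})$ with $q(f_{\eps})=F$; then $\phi(F)=[f_{\eps}(-)]_{\text{f}}$, so $\partial^{\alpha}(\phi(F))=\partial^{\alpha}([f_{\eps}(-)]_{\text{f}})=\pi^{\alpha}(f_{\eps})=D^{\alpha}(q(f_{\eps}))=D^{\alpha}(F)$, using again the square $\pi^{\alpha}=\partial^{\alpha}\circ[-]_{\text{f}}$ and then the hypothesis; note that $\partial^{\alpha}$ makes sense on $\gsf(U,\rti^{d})$ precisely thanks to Thm.~\ref{thm:derGSF}. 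Thus $\partial^{\alpha}\circ\phi=D^{\alpha}$ for every $\alpha\in\N^{n}$.

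Finally, uniqueness: if $\phi'\colon G\to\gsf(U,\rti^{d})$ also satisfies $\phi'\circ q=[-]_{\text{f}}$, then for each $F\in G$, choosing $(f_{\eps})$ with $q(f_{\eps})=F$ gives $\phi'(F)=\phi'(q(f_{\eps}))=[f_{\eps}(-)]_{\text{f}}=\phi(F)$, so $\phi'=\phi$; in fact surjectivity of $q$ alone pins down $\phi$, the derivative-preservation clause being automatically inherited. I do not expect a genuine obstacle here: apart from Thm.~\ref{thm:derGSF}, which is already proved, the argument is entirely formal, and the only point requiring a little care is the bookkeeping identification of $G$, via $D^{0}$, with a subset of $\gsf(U,\rti^{d})$ --- it is exactly this step that uses the surjectivity of $q$ and the $\alpha=0$ hypothesis, and after it $\phi$ is nothing but the resulting inclusion.
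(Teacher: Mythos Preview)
Your proof is correct and follows essentially the same approach as the paper's: both define $\phi$ via a $q$-preimage and use the $\alpha=0$ hypothesis $D^{0}\circ q=\pi^{0}$ to identify $\phi(F)$ with $[f_{\eps}(-)]_{\text{f}}$, then verify derivative preservation by the identical computation $\partial^{\alpha}\phi(F)=[\partial^{\alpha}f_{\eps}]_{\text{f}}=\pi^{\alpha}(f_{\eps})=D^{\alpha}F$. Your presentation differs only cosmetically in that you phrase the definition of $\phi$ as the corestriction of the inclusion $D^{0}$, thereby dispensing with an explicit independence-of-representative check, whereas the paper writes $\phi(F):=[f_{\eps}]_{\text{f}}$ and then verifies well-definedness---but its well-definedness argument is precisely your observation that $[f_{\eps}]_{\text{f}}=\pi^{0}(f_{\eps})=D^{0}(q(f_{\eps}))=D^{0}(F)$ is independent of the chosen lift; you also spell out the uniqueness step more fully than the paper's ``we necessarily have to define''.
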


\begin{proof}
Since $q$ is surjective, if $F\in G$, we can find $(f_{\eps})\in\mathcal{M}_{U}^{d}$
such that $F=q(f_{\eps})$. We necessarily have to define $\phi(F):=\phi(q(f_{\eps}))=[f_{\eps}]_{\text{f}}=[f_{\eps}(-)]$.
The map $\phi$ is well-defined: if $F=q(\bar{f}_{\eps})$, then $D^{0}(q(\bar{f}_{\eps}))=q(\bar{f}_{\eps})=\pi^{0}(\bar{f}_{\eps})=[f_{\eps}]_{\text{f}}$.
It remains only to prove the preservation of derivatives. We have
$\partial^{\alpha}\phi(F)=\partial^{\alpha}[f_{\eps}(-)]=[\partial^{\alpha}f_{\eps}(-)]=[\partial^{\alpha}f_{\eps}]_{\text{f}}$,
and $D^{\alpha}F=D^{\alpha}\left(q(f_{\eps})\right)=\pi^{\alpha}(f_{\eps})=[\partial^{\alpha}f_{\eps}]_{\text{f}}$.
\end{proof}
We close this section by noting that trivially $\Set(U,\rti^{d})$
is not a universal solution of the same problem because we cannot
have a surjection like $[-]_{\text{f}}$. Finally, the universal solution
$\gsf(U,\rti^{d})$ is in a certain sense minimal because it is possible
to prove that $\gsf(\widetilde{\Omega}_{c},\rti^{d})\simeq\rhoGs(\Omega)$,
where $\Omega\subseteq\R^{n}$ is any open set and $\widetilde{\Omega}_{c}:=\left\{ [x_{\eps}]\in\rti^{n}\mid\exists K\Subset\Omega\,\forall^{0}\eps:\ x_{\eps}\in K\right\} $,
see e.g.~\cite{GKV24,GKV15}, i.e.~up to isomorphism we get exactly
the Colombeau algebra on $\Omega$.

\section{Conclusions}

The aim of the present paper is not to compare, in some sense, different
spaces of generalized functions, but to characterize them using suitable
universal properties. However, we want to briefly close the paper
trying to clarify several frequent misunderstandings concerning Colombeau
theory and nonlinear operations on distributions.

If we are interested only to linear operations, we showed in Sec.~\ref{sec:distributions}
that the space of Schwartz distributions is the simplest solutions,
and any other solution would be less optimal. Schwartz impossibility
theorem, see e.g.~\cite{GKOS} and references therein, states that
nonlinear operations are seriously problematic for distributions.

In Sec.~\ref{sec:Colombeau}, we presented Colombeau theory as the
simplest solution of this problem among quotient algebras. A common
mainstream objection to Colombeau construction is that distributions
are not intrinsically embedded in the corresponding algebra. Besides
the fact that this is false, because a different index set instead
of $I=(0,1]$ would enable to have the searched intrinsic embedding
(and essentially with the same notations and with the same basic ideas,
see \cite{GiLu15}; see also \cite{GKOS}), one could argue that if
Colombeau algebra had historically appeared before Schwartz distributions,
now some people would not accept the latter because they do not intrinsically
embed into the former. On the contrary, Colombeau theory is the formalization
of the method of regularizations: a convenient setting, sharing several
properties with ordinary smooth functions, and containing convolutions
with any mollifier of the form $\frac{1}{\rho_{\eps}}\mu\left(\frac{x}{\rho_{\eps}}\right)$.
We motivate the universal properties of GSF exactly in this way.

The real technical drawbacks of Colombeau algebras are the lacking
of closure with respect to composition (see e.g.~\cite{GKOS}), not
good properties of Fourier transform as well as multidimensional integration
on infinite sets (see e.g.~\cite{MTG}), and the lacking of general
existence results for differential equations, such as the Picard-Lindelöf
or the Nash-Moser theorems. GSF solve almost all these problems.

On the other hand, an important open problem of GSF is a clear link
between the natural notion of pointwise solution for GSF (i.e.~regularized)
differential equations, and the notion of weak solution in Sobolev
spaces.

In the present paper, we showed that any other idea to formalize the
method of regularizing a singular problem would necessarily be less
simple than Colombeau algebras or spaces of GSF.

\end{document}